\numberwithin{equation}{section}
\newtheorem{theorem}{Theorem}[section]
\newtheorem{lemma}[theorem]{Lemma}
\newtheorem{prop}[theorem]{Proposition}
\newtheorem{corollary}[theorem]{Corollary}
\newtheorem{problem}[theorem]{Problem}
\theoremstyle{definition}
\newtheorem{definition}[theorem]{Definition}
\theoremstyle{remark}
\newtheorem{example}[theorem]{Example}
\newtheorem{remark}[theorem]{Remark}
\DeclareMathOperator{\Hom}{Hom}
\DeclareMathOperator{\Pol}{Pol}
\DeclareMathOperator{\Fun}{Fun}
\newcommand*{\op}{\mathrm{op}}
\newcommand*{\reg}{\mathrm{reg}}
\newcommand*{\wh}{\widehat}
\newcommand*{\ra}{\rangle}
\newcommand*{\CC}{\mathbb C}
\newcommand*{\N}{\mathbb N}
\newcommand*{\Z}{\mathbb Z}
\newcommand*{\R}{\mathbb R}
\newcommand*{\DD}{\mathbb D}
\newcommand*{\BB}{\mathbb B}
\newcommand*{\MM}{\mathbb M}
\newcommand*{\cO}{\mathscr O}
\newcommand*{\cF}{\mathscr F}
\newcommand*{\rp}{\mathrm p}
\newcommand*{\rt}{\mathrm t}
\newcommand*{\rV}{\mathrm V}
\newcommand*{\spn}{\mathrm{span}}
\newcommand*{\Alg}{\mathsf{Alg}}
\newcommand*{\AM}{\mathsf{AM}}
\newcommand*{\ol}{\overline}
\newcommand*{\xra}{\xrightarrow}
\newcommand{\lriso}{\stackrel{\textstyle\sim}{\smash\longrightarrow%
\vphantom{\scriptscriptstyle{_1}}}}
\begin{document}
\title[Quantum polydisk and quantum ball]{Holomorphic functions on the quantum polydisk\\
and on the quantum ball}
\subjclass[2010]{58B34, 46L89, 46L65, 53D55, 32A38, 46H99, 14A22, 16S38}
\author{A. Yu. Pirkovskii}
\address{Faculty of Mathematics\\
National Research University Higher School of Economics\\
6 Usacheva, 119048 Moscow, Russia}
\email{aupirkovskii@hse.ru, pirkosha@gmail.com}
%\thanks{This work was supported by the RFBR grant no. 15-01-08392.}
\date{}

\begin{abstract}
We introduce and study noncommutative (or ``quantized'') versions of the algebras
of holomorphic functions on the polydisk and on the ball in $\CC^n$.
Specifically, for each $q\in\CC\setminus\{ 0\}$ we construct Fr\'echet algebras
$\cO_q(\DD^n)$ and $\cO_q(\BB^n)$ such that for $q=1$ they are isomorphic to the
algebras of holomorphic functions on the open polydisk $\DD^n$ and on the open ball $\BB^n$,
respectively. In the case where $0<q<1$, we establish a relation between our
holomorphic quantum ball algebra $\cO_q(\BB^n)$ and L.\,L.\,Vaksman's algebra
$C_q(\bar\BB^n)$ of continuous functions on the closed quantum ball.
Finally, we show that $\cO_q(\DD^n)$ and $\cO_q(\BB^n)$ are not isomorphic
provided that $|q|=1$ and $n\ge 2$. This result can be interpreted as a $q$-analog of
Poincar\'e's theorem, which asserts that $\DD^n$ and $\BB^n$ are not biholomorphically equivalent
unless $n=1$.
\end{abstract}

\maketitle

\section{Introduction}
\label{sect:intro}

The subject of the present paper may be roughly described as ``noncommutative complex analysis'',
or ``noncommutative complex analytic geometry''.
This field of mathematics is not as unified as other parts of noncommutative
geometry, and there are many points of view on what noncommutative
complex analysis is. At present, there are several ``branches'' of noncommutative complex analysis,
and they are largely independent of each other. Here is a brief overview of some of them.

\smallskip
\begin{asparaenum}[$\bullet$]
\item
An operator algebra approach to noncommutative complex analysis, which
goes back to the foundational
papers \cite{Arv_anal,Arv_subalg_I} of W.\,B.\,Arveson.
Important contributions to this field include a series of recent publications
by G.\,Popescu (see, e.g., \cite{Pop_disk,Pop_holball,Pop_var,Pop_ncdom,Pop_aut_poly}, to cite a few),
by P.\,Muhly and B.\,Solel
(see, e.g., \cite{MS_tens_corr,MS_Hardy,MS_Schur,MS_tensfunc}).
See also \cite{Dav_FSG,Dav_Pop,Dav_bihol,Dav_isoprobl,Ar_Lat_iso,Ar_Lat_iso2}.
\item
The theory of free holomorphic functions initiated by J.\,L.\,Taylor \cite{T2,T3}
and developed in \cite{Lum_PI,Lum_matr,Voic1,Voic2,AMC_glob,HCMCS,HCMC_pencil}, for example.
The current state of this subject is thoroughly presented in a recent monograph
\cite{KVV_book} by D.\,S.\,Kaliuzhnyi-Verbovetskyi and V.\,Vinnikov.
\item
An algebraic view of noncommutative complex geometry
based on A.~Connes' fundamental ideas \cite{Connes_Cst,Connes_NCG} of
noncommutative connections and positive Hochschild cocycles.
Among numerous publications in this field, we mention
\cite{DS,PolShcw_cats,Pol_class,Pol_qsicoh,Rosenbg_Lapl,KLvS,KM1,KM2,
Beggs_Smith,Buach_qbnd,Buach_hmg}.
A detailed discussion of this side of noncommutative complex geometry is given
in \cite{KLvS} and \cite{Beggs_Smith}.
\item
The theory of quantum bounded symmetric domains founded by L.\,L.\,Vaksman and his
collaborators. The pioneering paper in this field is \cite{Vaks_Dolb}, and complex-analytic
aspects of this theory are emphasized in \cite{Vaks_max,Prosk_Tur_shil,Joh_Tur,Ber_Gis_Tur},
for example. For more references and
further information on quantum bounded symmetric domains, see the
lecture notes \cite{Vaks_sborn} and Vaksman's recent monograph \cite{Vaks_book}.
\item
A study of noncommutative Cauchy-Riemann equations, both in the functional-analytic
\cite{RW,Szafr} and in the algebraic \cite{BDR} contexts.
\item
The theory of DQ-modules initiated by M.\,Kashiwara and P.\,Schapira \cite{KS_dqmod}.
\end{asparaenum}

\smallskip

Needless to say, noncommutative complex analysis, as well as the whole field
of noncommutative geometry, was strongly influenced by the foundational
papers of V.\,G.\,Drinfeld \cite{Drinf} and Yu.\,I.\,Manin \cite{Manin_Kosz,Manin_NCG}.

A more detailed comparison of various branches of noncommutative complex analysis
can be found in \cite{Pir_qball_arx}.

Our approach to noncommutative complex analysis is slightly different
from what was mentioned above. Broadly speaking,
the objects we are mostly interested in are nonformal deformations
of the algebras of holomorphic functions on complex Stein manifolds.
The present paper is only the first step in studying such objects.
Specifically, we concentrate on deformations of the algebras of holomorphic
functions on the open polydisk and the open ball
in $\CC^n$. We hope that these concrete examples can serve as a basis
for further research in noncommutative complex analysis.
To motivate our constructions, let $q\in\CC\setminus\{ 0\}$, and consider the
algebra $\cO_q^\reg(\CC^n)$ of ``regular functions on the quantum affine space'' generated by
$n$ elements $x_1,\ldots ,x_n$ subject to the relations $x_i x_j=qx_j x_i$ for all $i<j$
(see, e.g., \cite{Br_Good}). If $q=1$, then $\cO_q^\reg(\CC^n)$ is nothing but the
polynomial algebra $\CC[x_1,\ldots ,x_n]=\cO^\reg(\CC^n)$.
It is a standard fact that the monomials $x_1^{k_1}\cdots x_n^{k_n}$
$(k_1,\ldots ,k_n\ge 0)$ form a basis of $\cO_q^\reg(\CC^n)$,
so the underlying vector space of $\cO_q^\reg(\CC^n)$ can be identified with that
of $\cO^\reg(\CC^n)$.

Let now $r\in (0,+\infty]$, and let $\DD^n_r$ and $\BB^n_r$ denote the open polydisk
and the open ball of radius $r$ in $\CC^n$. Thus we have
\begin{align*}
\DD^n_r&=\Bigl\{ z=(z_1,\ldots ,z_n)\in\CC^n : \max_{1\le i\le n} |z_i|<r\Bigr\},\\
\BB^n_r&=\Bigl\{ z=(z_1,\ldots ,z_n)\in\CC^n : \sum_{i=1}^n |z_i|^2<r^2\Bigr\}.
\end{align*}
Since $\cO^\reg(\CC^n)$ is dense both in $\cO(\DD^n_r)$ and in $\cO(\BB^n_r)$, it seems
reasonable to define the algebras of holomorphic functions on the quantum polydisk and on the quantum ball
as certain completions of $\cO_q^\reg(\CC^n)$. Intuitively, the idea is to ``deform''
the pointwise multiplication on $\cO(\DD^n_r)$ and $\cO(\BB^n_r)$ in such a way
that $z_i z_j=qz_j z_i$ for all $i<j$. This idea is too naive, however, because it is not immediate whether
there exists a multiplication satisfying the above condition.
In fact, to give a ``correct'' definition
of our quantized algebras, we have to ``deform'' not only the multiplication,
but also the underlying topological vector spaces of $\cO(\DD^n_r)$ and $\cO(\BB^n_r)$
(see Remark~\ref{rem:no_mult}).

The structure of the paper is as follows. After giving some preliminaries in Section~\ref{sect:prelim},
we proceed in Section~\ref{sect:qball} to define our quantized function algebras
$\cO_q(\DD^n_r)$ and $\cO_q(\BB^n_r)$. The algebra $\cO_q(\DD^n_r)$ was introduced
earlier in \cite{Pir_HFG} in a more general multiparameter case, while the (more involved)
definition of $\cO_q(\BB^n_r)$ is new. In the same section we show that the algebra
isomorphism $\cO_q^\reg(\CC^n)\to\cO_{q^{-1}}^\reg(\CC^n)$ given by $x_i\mapsto x_{n-i}$
extends to $\cO_q(\DD^n_r)$ and $\cO_q(\BB^n_r)$.
In Section~\ref{sect:qball_Vaks}, we compare $\cO_q(\BB^n_r)$ (in the special case where $0<q<1$)
with L.\,L.\,Vaksman's algebra of continuous functions on the closed quantum ball \cite{Vaks_max}.
Roughly, our result is that $\cO_q(\BB^n_r)$ (similarly to its ``classical'' prototype)
is isomorphic to the completion of $\cO_q^\reg(\CC^n)$
with respect to the ``quantum $\sup$-norms'' over the closed balls contained in $\BB^n_r$.
In Section~\ref{sect:Poincare}, we show that $\cO_q(\DD^n_r)$ and $\cO_q(\BB^n_r)$ are
topologically isomorphic if $|q|\ne 1$, but are not topologically isomorphic if $|q|=1$,
$n\ge 2$ and $r<\infty$. The latter result may be viewed as a $q$-analog of Poincar\'e's theorem,
which asserts that $\DD^n_r$ and $\BB^n_r$ are not biholomorphically equivalent.

In future publications, we plan to develop a deformation-theoretic
approach to our quantum polydisk and quantum ball algebras and to show, in particular, that they
form strict Fr\'echet deformation quantizations of $\cO(\DD^n_r)$ and
$\cO(\BB^n_r)$ in the sense of M.\,A.\,Rieffel (see, e.g., \cite{Rf_mem}).
We believe that such interpretations of $\cO_q(\DD^n_r)$ and $\cO_q(\BB^n_r)$
will indicate that our {\em ad hoc} definitions given in the present paper
are indeed the ``correct'' ones.

This paper is a part of the author's preprint \cite{Pir_qball_arx}.
Some of the results presented here were announced in \cite{Pir_ISQS21}.

\section{Preliminaries and notation}
\label{sect:prelim}

We shall work over the field $\CC$ of complex numbers. All algebras are assumed to
be associative and unital, and all algebra homomorphisms are assumed to be unital
(i.e., to preserve identity elements).
By a {\em Fr\'echet algebra} we mean a complete metrizable locally convex
algebra (i.e., a topological algebra whose underlying space
is a Fr\'echet space). A {\em locally $m$-convex algebra} \cite{Michael} is a topological
algebra $A$ whose topology can be defined by a family of submultiplicative
seminorms (i.e., seminorms $\|\cdot\|$ satisfying $\| ab\|\le \| a\| \| b\|$
for all $a,b\in A$). A complete locally $m$-convex algebra is called
an {\em Arens-Michael algebra} \cite{X2}.

Throughout we will use the following multi-index notation. Let $\Z_+=\N\cup\{ 0\}$ denote the set
of all nonnegative integers. For each $n\in\N$ and each $d\in \Z_+$, let $W_{n,d}=\{ 1,\ldots ,n\}^d$,
and let $W_n=\bigsqcup_{d\in\Z_+} W_{n,d}$. Thus a typical element of $W_n$
is a $d$-tuple $\alpha=(\alpha_1,\ldots ,\alpha_d)$ of arbitrary length
$d\in\Z_+$, where $\alpha_j\in\{ 1,\ldots ,n\}$ for all $j$. The only element
of $W_{n,0}$ will be denoted by $*$.
For each $\alpha\in W_{n,d}\subset W_n$, let $|\alpha|=d$.
Given an algebra $A$, an $n$-tuple $a=(a_1,\ldots ,a_n)\in A^n$, and
$\alpha=(\alpha_1,\ldots ,\alpha_d)\in W_n$, we let
$a_\alpha=a_{\alpha_1}\cdots a_{\alpha_d}\in A$ if $d>0$;
it is also convenient to set $a_*=1\in A$.
Given $k=(k_1,\ldots ,k_n)\in\Z_+^n$, we let $a^k=a_1^{k_1}\cdots a_n^{k_n}$.
As usual, for each $k=(k_1,\ldots ,k_n)\in\Z_+^n$, we let $|k|=k_1+\cdots +k_n$.

We will also use the standard notation related to $q$-numbers
(see, e.g., \cite{Kac_Ch,Kassel,Gasp_Rahm}).
Given $q\in\CC^\times=\CC\setminus\{ 0\}$ and $k\in\N$, let
\[
[k]_q=1+q+\cdots +q^{k-1};\quad [k]_q!=[1]_q [2]_q \cdots [k]_q.
\]
It is also convenient to let $[0]_q!=1$. If $k=(k_1,\ldots ,k_n)\in\Z_+^n$, then
we let $[k]_q!=[k_1]_q!\cdots [k_n]_q!$. If $|q|<1$, then for each $a\in\CC$
we let $(a;q)_\infty=\prod_{j=0}^\infty (1-aq^j)$.

Given an algebra $A$, the {\em Arens-Michael envelope}
of $A$ is the completion of
$A$ with respect to the family of all submultiplicative seminorms on $A$.
Equivalently, the Arens-Michael envelope of $A$ is an Arens-Michael algebra $\wh{A}$
together with a natural isomorphism
\begin{equation}
\label{AM_def}
\Hom_\Alg(A,B)\cong \Hom_\AM(\wh{A},B)\qquad (B\in\AM),
\end{equation}
where $\Alg$ is the category of algebras and algebra homomorphisms,
and $\AM$ is the category of Arens-Michael algebras and continuous algebra
homomorphisms.
Moreover, the correspondence $A\mapsto\wh{A}$ is a functor from $\Alg$
to $\AM$, and this functor is left adjoint to the forgetful functor $\AM\to\Alg$
(see~\eqref{AM_def}).

Arens-Michael envelopes were introduced by J.\,L.\,Taylor \cite{T1} under the name of
``completed locally $m$-convex envelopes''. Now it is customary to call them
``Arens-Michael envelopes'', following the terminology suggested by A.\,Ya.\,Helemskii \cite{X2}.

Here are some basic examples of Arens-Michael envelopes.
If $A=\CC[z_1,\ldots ,z_n]$ is the polynomial
algebra, then the Arens-Michael envelope of $A$ is the algebra $\cO(\CC^n)$
of entire functions on $\CC^n$ \cite{T2}. More generally \cite[Example 3.6]{Pir_qfree},
if $X$ is an affine algebraic variety and $A=\cO^\reg(X)$ is the algebra of regular functions on $X$,
then $\wh{A}$ is the algebra $\cO(X)$ of holomorphic functions on $X$.
A similar result holds in the case where $(X,\cO^\reg_X)$ is an affine scheme
of finite type over $\CC$ \cite[Example 7.2]{Pir_HFG}.

Let $q\in\CC^\times$.
Recall from Section~\ref{sect:intro} that the algebra $\cO_q^\reg(\CC^n)$
{\em of regular functions on the quantum affine $n$-space} is generated by
$n$ elements $x_1,\ldots ,x_n$ subject to the relations $x_i x_j=qx_j x_i$ for all $i<j$
(see, e.g., \cite{Br_Good}).
Using the above examples as a motivation, we define \cite{Pir_qfree} the algebra
$\cO_q(\CC^n)$ {\em of holomorphic functions
on the quantum affine space} to be the
Arens-Michael envelope of $\cO_q^\reg(\CC^n)$.
As was shown in \cite[Theorem 5.11]{Pir_qfree}, we have
\begin{equation}
\label{O_q}
\cO_q(\CC^n)=
\Bigl\{
a=\sum_{k\in\Z_+^n} c_k x^k :
\| a\|_\rho=\sum_{k\in\Z_+^n} |c_k| w_q(k) \rho^{|k|}<\infty
\;\forall \rho>0
\Bigr\},
\end{equation}
where
\begin{equation}
\label{w_q}
w_q(k)=
\begin{cases}
1 & \text{if } |q|\ge 1;\\
|q|^{\sum_{i<j}k_i k_j} & \text{if } |q|<1.
\end{cases}
\end{equation}
The topology on $\cO_q(\CC^n)$ is given by the norms $\|\cdot\|_\rho \; (\rho>0)$,
and the multiplication is uniquely determined by $x_i x_j=qx_j x_i$ ($i<j$).
Moreover, each norm $\|\cdot\|_\rho$ is submultiplicative.

We refer to \cite{T2,Pir_qfree,Dos_hdim,Dos_absbas}
for explicit descriptions of Arens-Michael envelopes
of some other finitely generated algebras, including quantum tori, quantum Weyl algebras,
the algebra of quantum $2\times 2$-matrices, and universal enveloping algebras.
Further results on Arens-Michael envelopes can be found in
\cite{Pir_stbflat,Pir_locsolv,Dos_holfun,Dos_Slodk,Dos_locleft}.

\section{Quantum polydisk and quantum ball}
\label{sect:qball}

Let us start by recalling a well-known power series characterization of the algebra
$\cO(\DD^n_r)$ of holomorphic functions on the polydisk (see, e.g., \cite[Example 27.27]{MV}). We have
\begin{equation}
\label{poly_power_rep}
\cO(\DD^n_r)\cong
\Bigl\{
a=\sum_{k\in\Z_+^n} c_k z^k :
\| a\|_{\DD,\rho}=\sum_{k\in\Z_+^n} |c_k| \rho^{|k|}<\infty
\;\forall \rho\in (0,r) \Bigr\}.
\end{equation}
The space on the right-hand side of \eqref{poly_power_rep} is a subalgebra
of $\CC[[z_1,\ldots ,z_n]]$ and a Fr\'echet-Arens-Michael algebra under the family
$\{ \|\cdot\|_{\DD,\rho} : \rho\in (0,r)\}$ of submultiplicative norms.
The Fr\'echet algebra isomorphism~\eqref{poly_power_rep} takes each holomorphic
function on $\DD^n_r$ to its Taylor expansion at $0$.

The following definition is motivated by \eqref{O_q} and \eqref{poly_power_rep}.

\begin{definition}[\cite{Pir_ncStein,Pir_HFG}]
\label{def:q_poly}
Let $q\in\CC^\times$, and let $r\in (0,+\infty]$.
The {\em algebra of holomorphic functions on the
quantum $n$-polydisk of radius $r$} is
\begin{equation*}
\cO_q(\DD^n_r)=
\Bigl\{
a=\sum_{k\in\Z_+^n} c_k x^k :
\| a\|_{\DD,\rho}=\sum_{k\in\Z_+^n} |c_k| w_q(k) \rho^{|k|}<\infty
\;\forall \rho\in (0,r) \Bigr\},
\end{equation*}
where the function $w_q\colon \Z_+^n\to\R_+$ is given by \eqref{w_q}.
The topology on $\cO_q(\DD^n_r)$ is given by the norms $\|\cdot\|_{\DD,\rho}\; (0<\rho<r)$,
and the multiplication on $\cO_q(\DD^n_r)$ is uniquely determined by $x_i x_j=qx_j x_i$ for all $i<j$.
\end{definition}

In other words, $\cO_q(\DD^n_r)$ is the completion of $\cO_q^\reg(\CC^n)$
with respect to the family $\{\|\cdot\|_{\DD,\rho} : \rho\in (0,r)\}$ of submultiplicative norms.
Letting $q=1$ in Definition~\ref{def:q_poly} and comparing with~\eqref{poly_power_rep}, we see that
the map $\cO(\DD^n_r)\to\cO_1(\DD^n_r)$ taking each $f\in\cO(\DD^n_r)$ to its Taylor
series at $0$ is a Fr\'echet algebra isomorphism.
Note that, if $r=\infty$, then $\cO_q(\DD_r^n)=\cO_q(\CC^n)$
(see \eqref{O_q}).

To define the algebra of holomorphic functions on the quantum ball, we need
the following generalization of~\eqref{poly_power_rep} due to
L.\,A.\,Aizenberg and B.\,S.\,Mityagin \cite{Aiz_Mit}
(see also \cite{Rolewicz}). Given a complete bounded Reinhardt domain
$D\subset\CC^n$, let
\[
b_k(D)=\sup_{z\in D} |z^k|=\sup_{z\in\partial D} |z^k| \qquad (k\in\Z_+^n).
\]
Aizenberg and Mityagin proved that there is a topological isomorphism
\begin{equation}
\label{AizMit}
\cO(D)\cong
\Bigl\{ f=\sum_{k\in\Z_+^n} c_k z^k :
\| f\|_{s}=\sum_{k\in\Z_+^n}
|c_k| b_k(D) s^{|k|}<\infty
\; \forall s\in (0,1)\Bigr\}
\end{equation}
taking each holomorphic function on
$D$ to its Taylor series at $0$.

We clearly have $b_k(\DD^n_r)=r^{|k|}$, so \eqref{poly_power_rep} is
a special case of~\eqref{AizMit}. Consider now the case $D=\BB^n_r$.

\begin{lemma}
\label{lemma:bk_ball}
For each $r\in (0,+\infty)$, we have
\[
b_k(\BB_r^n)=\left(\frac{k^k}{|k|^{|k|}}\right)^{\frac{1}{2}} r^{|k|}.
\]
\end{lemma}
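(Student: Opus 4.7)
The plan is to reduce the computation to a constrained optimization of a monomial on the simplex and then apply weighted AM--GM (or Lagrange multipliers).

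First, I would observe that $|z^k| = |z_1|^{k_1} \cdots |z_n|^{k_n}$ depends only on the moduli $|z_i|$, and that $\BB^n_r$ is itself a Reinhardt domain whose defining inequality involves only $|z_i|^2$. Setting $t_i = |z_i|^2 \ge 0$, computing $b_k(\BB^n_r)$ amounts to
\[
b_k(\BB^n_r)^2 = \sup\Bigl\{\, \prod_{i=1}^n t_i^{k_i} : t_i \ge 0,\ \sum_{i=1}^n t_i \le r^2 \,\Bigr\}.
\]
Since the exponents $k_i$ are nonnegative, the supremum is attained on the boundary $\sum t_i = r^2$ (making $t_i$ larger can only increase the product).

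The main step is then to bound $\prod t_i^{k_i}$ on this simplex. By the weighted AM--GM inequality with weights $k_i/|k|$,
\[
\prod_{i=1}^n \Bigl(\frac{|k|\,t_i}{k_i}\Bigr)^{k_i/|k|} \le \sum_{i=1}^n \frac{k_i}{|k|}\cdot\frac{|k|\,t_i}{k_i} = \sum_{i=1}^n t_i,
\]
which rearranges (after raising to the power $|k|$) to
\[
\prod_{i=1}^n t_i^{k_i} \le \frac{k^k}{|k|^{|k|}}\Bigl(\sum_{i=1}^n t_i\Bigr)^{|k|} \le \frac{k^k}{|k|^{|k|}}\, r^{2|k|}.
\]
Equality in AM--GM occurs when $t_i/(k_i/|k|)$ is constant, i.e.\ when $t_i = r^2 k_i/|k|$; this point lies on $\sum t_i = r^2$ and the corresponding $z$ (with $|z_i|^2 = r^2 k_i/|k|$) is in $\overline{\BB^n_r}$. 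Taking square roots gives $b_k(\BB^n_r) = (k^k/|k|^{|k|})^{1/2} r^{|k|}$.

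I would briefly mention the convention $0^0 = 1$: if some $k_i = 0$, then at the optimum $t_i = 0$, the corresponding factor $t_i^{k_i}$ and $k_i^{k_i}$ are both understood to be $1$, and the argument goes through verbatim after restricting to those indices with $k_i > 0$. Since $\BB^n_r$ is open, strictly speaking the supremum is not attained in $\BB^n_r$ itself, but it is attained on $\partial \BB^n_r$, and $b_k(D) = \sup_{z\in\partial D}|z^k|$ by the very definition recalled above, so this is not an issue. The only potential obstacle is keeping the combinatorics straight for degenerate indices $k_i = 0$, but this is purely notational.
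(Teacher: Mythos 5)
Your proof is correct and proceeds by exactly the same reduction the paper has in mind: passing to $t_i=|z_i|^2$ and maximizing the monomial $\prod t_i^{k_i}$ on the simplex $\sum t_i\le r^2$. The paper disposes of this with a one-line appeal to Lagrange multipliers; you instead use weighted AM--GM with weights $k_i/|k|$, which reaches the same optimizer $t_i=r^2k_i/|k|$ but has the advantage of being a genuine inequality argument (so there is no need to check that the critical point is a maximum, and the degenerate indices $k_i=0$ drop out harmlessly). Both routes are sound; yours is a bit more self-contained.
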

\begin{proof}
This is an elementary calculation involving Lagrange multipliers.
\end{proof}

\begin{corollary}
\label{cor:ball_pow_rep0}
For each $r\in (0,+\infty]$, there is a topological isomorphism
\begin{equation}
\label{ball_pow_rep2}
\cO(\BB_r^n)\cong
\biggl\{ f=\sum_{k\in\Z_+^n} c_k z^k :
\| f\|'_{\BB,\rho}=\sum_{k\in\Z_+^n}
|c_k| \left(\frac{k^k}{|k|^{|k|}}\right)^{\frac{1}{2}} \rho^{|k|}<\infty
\; \forall \rho\in (0,r)\biggr\}
\end{equation}
taking each holomorphic function on $\BB_r^n$ to its Taylor series at $0$.
\end{corollary}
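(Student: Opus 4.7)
The proof plan is to deduce the corollary from the Aizenberg--Mityagin theorem \eqref{AizMit} and the explicit formula for $b_k(\BB^n_r)$ in Lemma~\ref{lemma:bk_ball}, handling the cases $r<\infty$ and $r=\infty$ separately.

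First I would treat the case $r\in(0,+\infty)$. The ball $\BB^n_r$ is a complete bounded Reinhardt domain in $\CC^n$, so \eqref{AizMit} applies directly and yields a topological isomorphism between $\cO(\BB^n_r)$ and the space of formal power series $f=\sum_k c_k z^k$ with
\[
\|f\|_s = \sum_{k\in\Z_+^n} |c_k|\, b_k(\BB^n_r)\, s^{|k|} < \infty \qquad (s\in(0,1)),
\]
the isomorphism being the Taylor expansion at $0$. Substituting $b_k(\BB^n_r)=(k^k/|k|^{|k|})^{1/2} r^{|k|}$ from Lemma~\ref{lemma:bk_ball} and making the change of variable $\rho=rs$ converts the family $\{\|\cdot\|_s : s\in(0,1)\}$ into the family $\{\|\cdot\|'_{\BB,\rho} : \rho\in(0,r)\}$ as defined in \eqref{ball_pow_rep2}. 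Since this is simply a reparametrization of the same set of seminorms, both the underlying sets and the locally convex topologies coincide.

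For the case $r=+\infty$, the domain $\BB^n_\infty=\CC^n$ is not bounded, so \eqref{AizMit} does not apply directly. Instead I would pass to the projective limit along the exhaustion $\CC^n=\bigcup_{r'\in(0,\infty)}\BB^n_{r'}$. Since $\cO(\CC^n)$ carries the topology of uniform convergence on compacta and each compact set is contained in some $\ol{\BB^n_{r'}}$, the restriction maps induce a topological isomorphism
\[
\cO(\CC^n) \;\cong\; \varprojlim_{r'\to\infty} \cO(\BB^n_{r'}).
\]
Applying the already-proved case $r'<\infty$ to each term of the inverse system and noting that a power series $\sum c_k z^k$ lies in $\cO(\BB^n_{r'})$ for every $r'<\infty$ if and only if $\|f\|'_{\BB,\rho}<\infty$ for every $\rho\in(0,\infty)$, one identifies the right-hand side with the space appearing in \eqref{ball_pow_rep2} for $r=+\infty$. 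The Taylor-expansion identification is compatible with the restriction maps, so it gives the desired isomorphism.

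There is really no essential obstacle here: everything reduces to the already established theorem of Aizenberg--Mityagin and the elementary computation of Lemma~\ref{lemma:bk_ball}. The only point requiring a little care is the bookkeeping of the change of variables $\rho=rs$ and the verification that, in the unbounded case $r=\infty$, the projective-limit description of $\cO(\CC^n)$ matches the power-series space on the right-hand side of \eqref{ball_pow_rep2}; both are routine.
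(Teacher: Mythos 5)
Your proof is correct and follows essentially the same route as the paper: for $r<\infty$ it combines \eqref{AizMit} with Lemma~\ref{lemma:bk_ball} (the reparametrization $\rho=rs$ being the only small bookkeeping step), and for $r=\infty$ it passes to the projective limit $\cO(\CC^n)\cong\varprojlim_{r'<\infty}\cO(\BB^n_{r'})$ and uses compatibility of the Taylor-expansion isomorphisms with the linking maps. No gaps.
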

\begin{proof}
Let $\cO'(\BB_r^n)$ denote the power series space on the
right-hand side of~\eqref{ball_pow_rep2}.
If $r<\infty$, then the isomorphism $\cO(\BB^n_r)\cong\cO'(\BB^n_r)$
is immediate from \eqref{AizMit} and Lemma~\ref{lemma:bk_ball}.
To prove the result for $r=\infty$, observe that
we have obvious topological isomorphisms
\begin{equation}
\label{ball_invlim}
\cO(\CC^n)\cong\varprojlim\nolimits_{r<\infty} \cO(\BB_r^n),
\quad \cO'(\CC^n)\cong\varprojlim\nolimits_{r<\infty} \cO'(\BB_r^n).
\end{equation}
Moreover, the isomorphisms $\varphi_r\colon\cO(\BB_r^n)\to\cO'(\BB_r^n)$
(see~\eqref{ball_pow_rep2}), which are already defined for all $r<\infty$,
are clearly compatible with the linking maps of the inverse systems~\eqref{ball_invlim}.
Letting $\varphi_\infty=\varprojlim_{r<\infty} \varphi_r$, we obtain the required topological
isomorphism for $r=\infty$.
\end{proof}

For our purposes, a slightly different power series representation of $\cO(\BB^n_r)$ is needed.
Let us start with an elementary lemma.

\begin{lemma}
\label{lemma:Stirling}
For each $k\in\Z_+^n$, let
\begin{equation}
\label{ak_bk}
a_k=\frac{k!}{|k|!},\quad b_k=\frac{k^k}{|k|^{|k|}}.
\end{equation}
Then
\begin{equation}
\label{asymp}
\lim_{k\to\infty} \left(\frac{a_k}{b_k}\right)^{\frac{1}{|k|}}=1.
\end{equation}
\end{lemma}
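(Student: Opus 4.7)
The plan is to apply Stirling's formula to both $k!$ and $|k|!$ and show that the ratio $a_k/b_k$ grows at most polynomially in $|k|$, so that its $|k|$-th root tends to $1$. Writing
\[
\frac{a_k}{b_k}=\frac{k!\,|k|^{|k|}}{|k|!\,k^k},
\]
I observe that only the indices $i$ with $k_i\ge 1$ contribute nontrivially, since for $k_i=0$ we have $k_i!=k_i^{k_i}=1$ (with the convention $0^0=1$). So the zero entries are not really an obstacle: I just restrict every product to the set $S(k)=\{i:k_i\ge 1\}$.

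Next, I would invoke the two-sided Stirling bound: there exist constants $0<c_1<c_2$ such that for every integer $m\ge 1$,
\[
c_1\sqrt{m}\,m^m e^{-m}\le m!\le c_2\sqrt{m}\,m^m e^{-m}.
\]
Applying this to each factor $k_i!$ for $i\in S(k)$, and to $|k|!$ (noting that the assertion $k\to\infty$ means $|k|\to\infty$, so eventually $|k|\ge 1$), the exponential factors $e^{-|k|}$ in numerator and denominator cancel because $\sum_i k_i=|k|$, leaving
\[
\frac{a_k}{b_k}\asymp \frac{\prod_{i\in S(k)}\sqrt{k_i}}{\sqrt{|k|}}.
\]
Since $|S(k)|\le n$ and each $k_i\le|k|$, the numerator is bounded by $|k|^{n/2}$, and it is at least $1$ because each $k_i\ge 1$ in $S(k)$. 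Hence there are constants $C_1,C_2>0$ (depending only on $n$) and an integer $N$ such that for all $k$ with $|k|\ge N$,
\[
\frac{C_1}{\sqrt{|k|}}\le \frac{a_k}{b_k}\le C_2\,|k|^{(n-1)/2}.
\]

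Finally, for any polynomial bound $P(|k|)$ in $|k|$ (with positive leading coefficient), one has $P(|k|)^{1/|k|}\to 1$ as $|k|\to\infty$, since $\log P(|k|)=O(\log|k|)$ and $(\log|k|)/|k|\to 0$. Applying this to both sides of the sandwich above yields \eqref{asymp}. The only minor technical point (hardly an obstacle) is the careful handling of indices $i\notin S(k)$ in the Stirling estimate, which is resolved by the convention $0^0=1$ that makes the omitted factors equal to $1$.
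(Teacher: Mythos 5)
Your proof is correct and follows essentially the same route as the paper: apply Stirling's formula so that after cancellation of the $e^{-|k|}$ factors only a ratio of square-root terms remains, and then observe that this ratio grows at most polynomially in $|k|$, so that its $|k|$-th root tends to $1$. The only cosmetic difference is that you handle the indices with $k_i=0$ by restricting products to $S(k)=\{i:k_i\ge 1\}$, whereas the paper does it by inserting $m^+=m+1$ into Stirling's formula, and you work with two-sided Stirling inequalities rather than the exact asymptotic form; neither difference affects the substance of the argument.
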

\begin{proof}
For each $m\in\Z_+$, let $m^+=m+1$. By Stirling's formula, there exists a
function $\theta\colon\Z_+\to\R$ such that
$\theta(m)\to 0$ as $m\to\infty$ and\footnote{We use $m^+$ instead of $m$ in \eqref{Stirling}
in order to cover the case $m=0$, which is essential when we pass to multi-indices.}
\begin{equation}
\label{Stirling}
m!=\sqrt{2\pi m^+}\, m^m  e^{-m+\theta(m)} \qquad (m\in\Z_+).
\end{equation}
Therefore for each $k=(k_1,\ldots ,k_n)\in\Z_+^n$ we have
\begin{align*}
k! &= (2\pi)^{n/2} (k_1^+\cdots k_n^+)^{1/2} k^k e^{-|k|+\sum_i\theta(k_i)},\\
|k|! &= (2\pi)^{1/2} (|k|+1)^{1/2} |k|^{|k|} e^{-|k|+\theta(|k|)},
\end{align*}
whence
\[
\frac{a_k}{b_k}= (2\pi)^{\frac{n-1}{2}}
\left(\frac{k_1^+\cdots k_n^+}{|k|+1}\right)^{\frac{1}{2}} e^{\tau(k)} \qquad (k\in\Z_+^n),
\]
where $\tau(k)=\sum_i\theta(k_i)-\theta(|k|)$ is bounded. Now, in order to prove~\eqref{asymp},
it remains to show that
\begin{equation}
\label{asymp2}
\lim_{k\to\infty} \left(\frac{k_1^+\cdots k_n^+}{|k|+1}\right)^{\frac{1}{2|k|}}=1.
\end{equation}
We have
\begin{gather*}
\left(\frac{k_1^+\cdots k_n^+}{|k|+1}\right)^{\frac{1}{2|k|}}
\le \left(\frac{(|k|+1)^n}{|k|+1}\right)^{\frac{1}{2|k|}}
=(|k|+1)^{\frac{n-1}{2|k|}} \to 1 \qquad (k\to\infty);\\
\left(\frac{k_1^+\cdots k_n^+}{|k|+1}\right)^{\frac{1}{2|k|}}
\ge \left(\frac{\max_i k_i^+}{|k|+1}\right)^{\frac{1}{2|k|}}
\ge \left(\frac{1}{n}\right)^{\frac{1}{2|k|}} \to 1 \qquad (k\to\infty).
\end{gather*}
This proves \eqref{asymp2}, which, in turn, implies \eqref{asymp}.
\end{proof}

\begin{prop}
\label{prop:ball_pow_ser}
For each $r\in (0,+\infty]$, there is a topological isomorphism
\begin{equation}
\label{ball_pow_rep}
\cO(\BB_r^n)\cong
\biggl\{ f=\sum_{k\in\Z_+^n} c_k z^k :
\| f\|_{\BB,\rho}=\sum_{k\in\Z_+^n}
|c_k| \left(\frac{k!}{|k|!}\right)^{\frac{1}{2}} \rho^{|k|}<\infty
\; \forall \rho\in (0,r)\biggr\}
\end{equation}
taking each holomorphic function on $\BB_r^n$ to its Taylor series at $0$.
\end{prop}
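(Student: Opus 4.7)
The plan is to deduce Proposition~\ref{prop:ball_pow_ser} from Corollary~\ref{cor:ball_pow_rep0} by showing that the two weight systems $(k!/|k|!)^{1/2}$ in~\eqref{ball_pow_rep} and $(k^k/|k|^{|k|})^{1/2}$ in~\eqref{ball_pow_rep2} produce the same Fr\'echet power series space via the identity on Taylor coefficients. Once that comparison is established, both the set-theoretic description and the topological isomorphism with $\cO(\BB^n_r)$ transfer directly from Corollary~\ref{cor:ball_pow_rep0}, as does the fact that the isomorphism is implemented by Taylor expansion at the origin.

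Write $a_k=k!/|k|!$ and $b_k=k^k/|k|^{|k|}$ as in~\eqref{ak_bk}. The key input is Lemma~\ref{lemma:Stirling}, which gives $(a_k/b_k)^{1/|k|}\to 1$ as $|k|\to\infty$. I would first upgrade this asymptotic into a global two-sided bound: for every $\delta>1$ there exists $C_\delta\ge 1$ such that
\[
C_\delta^{-1}\,\delta^{-|k|}\;\le\;\frac{a_k}{b_k}\;\le\;C_\delta\,\delta^{|k|}
\qquad (k\in\Z_+^n),
\]
obtained by applying Lemma~\ref{lemma:Stirling} outside a finite set of multi-indices and absorbing that finite set into the constant $C_\delta$.

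Given this, fix $\rho\in(0,r)$ and choose $\delta>1$ with $\rho':=\delta^{1/2}\rho<r$. Taking square roots in the above two-sided estimate yields $a_k^{1/2}\le C_\delta^{1/2}\delta^{|k|/2}b_k^{1/2}$ together with the reverse inequality, which by a direct computation gives
\[
\|f\|_{\BB,\rho}\;\le\;C_\delta^{1/2}\,\|f\|'_{\BB,\rho'},
\qquad
\|f\|'_{\BB,\rho}\;\le\;C_\delta^{1/2}\,\|f\|_{\BB,\rho'},
\]
for every formal series $f=\sum_{k}c_k z^k$. These two inequalities show that the two families of norms determine the same set of finite-norm series and the same directed family (hence the same Fr\'echet topology) as $\rho$ ranges over $(0,r)$. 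Composing with the isomorphism supplied by Corollary~\ref{cor:ball_pow_rep0} produces the required topological isomorphism~\eqref{ball_pow_rep}, again implemented by $f\mapsto\sum_k (\dd^k f/k!)(0)\,z^k$. No step in this plan presents a genuine obstacle; the only mildly delicate point is the passage from the pointwise asymptotic on $(a_k/b_k)^{1/|k|}$ to a bound uniform in $k$, which is precisely why the adjustable constant $C_\delta$ is introduced.
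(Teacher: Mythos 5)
Your proposal is correct and follows essentially the same route as the paper: both proofs reduce the claim to an equivalence of the two weight families via Corollary~\ref{cor:ball_pow_rep0}, both rely on Lemma~\ref{lemma:Stirling} as the key asymptotic input, and both absorb finitely many multi-indices into a constant to pass from the pointwise limit to a uniform two-sided bound. The only cosmetic difference is that you parametrize the slack by $\delta>1$ and set $\rho'=\delta^{1/2}\rho$, whereas the paper picks $\rho_1\in(\rho,r)$ directly and uses the ratio $\rho_1/\rho$; these are the same estimate.
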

\begin{proof}
In view of Corollary~\ref{cor:ball_pow_rep0},
it suffices to show that the families
\begin{equation}
\label{fam_norms_ball}
\{ \|\cdot\|_{\BB,\rho} : \rho\in (0,r)\}\quad\text{and}\quad
\{ \|\cdot\|_{\BB,\rho}' : \rho\in (0,r)\}
\end{equation}
of norms are equivalent on $\CC[z_1,\ldots ,z_n]$. Define sequences $(a_k)_{k\in\Z_+^n}$
and $(b_k)_{k\in\Z_+^n}$ by~\eqref{ak_bk}. Fix any $\rho\in (0,r)$, and choose
$\rho_1\in (\rho,r)$. By Lemma~\ref{lemma:Stirling}, there exists $K\in\N$ such that
\begin{equation}
\label{ak_bk_est}
\frac{\rho}{\rho_1} \le \left(\frac{a_k}{b_k}\right)^{\frac{1}{2|k|}} \le \frac{\rho_1}{\rho}
\qquad (|k|\ge K).
\end{equation}
Using \eqref{ak_bk_est}, we can find $C>0$ such that
\[
a_k^{1/2} \le C b_k^{1/2} \left(\frac{\rho_1}{\rho}\right)^{|k|},
\quad b_k^{1/2} \le C a_k^{1/2} \left(\frac{\rho_1}{\rho}\right)^{|k|}
\qquad (k\in\Z_+^n).
\]
Hence for each $f=\sum_k c_k z^k\in\CC[z_1,\ldots ,z_n]$ we have
\begin{gather*}
\| f\|_{\BB,\rho}=\sum_k |c_k| a_k^{1/2} \rho^{|k|}
\le C\sum_k |c_k| b_k^{1/2} \rho_1^{|k|} = C\| f\|'_{\BB,\rho_1},\\
\| f\|'_{\BB,\rho}=\sum_k |c_k| b_k^{1/2} \rho^{|k|}
\le C\sum_k |c_k| a_k^{1/2} \rho_1^{|k|} = C\| f\|_{\BB,\rho_1}.
\end{gather*}
Thus the families \eqref{fam_norms_ball} of norms on $\CC[z_1,\ldots ,z_n]$ are equivalent,
and hence the power series spaces on the right-hand sides of~\eqref{ball_pow_rep}
and~\eqref{ball_pow_rep2} coincide. This completes the proof.
\end{proof}

\begin{remark}
At the moment, it is not obvious whether the norms $\|\cdot\|_{\BB,\rho}$
defined by~\eqref{ball_pow_rep} are submultiplicative on $\cO(\BB_r^n)$. In fact they are, and
this can be proved directly by using the inequality
\[
\binom{m}n\binom{p}q\le\binom{m+p}{n+q},
\]
which is immediate from the Chu-Vandermonde formula (see, e.g., \cite[1.1.17]{Stanley2}).
We omit the details, because a more general result will be proved
in Theorem~\ref{thm:qball}.
\end{remark}

Now, in order to define a $q$-analog of $\cO(\BB^n_r)$, we need to
``quantize'' the norms $\|\cdot\|_{\BB,\rho}$ given by~\eqref{ball_pow_rep}.
This will be done in the following two lemmas.

\begin{lemma}
For each $q>0$ and for each $k,\ell\in\Z_+^n$, we have
\begin{equation}
\label{q-CV-ineq}
\frac{\bigl[|k+\ell|\bigr]_q!}{[k+\ell]_q!} \ge \frac{\bigl[|k|\bigr]_q!}{[k]_q!}
\frac{\bigl[|\ell|\bigr]_q!}{[\ell]_q!}\, q^{\sum_{i<j} k_i \ell_j}.
\end{equation}
\end{lemma}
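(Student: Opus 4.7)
My plan is combinatorial. I would first rewrite the inequality in terms of the $q$-multinomial coefficients $\binom{N}{k_1,\ldots,k_n}_q:=[N]_q!/([k_1]_q!\cdots[k_n]_q!)$ as
\[
\binom{|k+\ell|}{k+\ell}_q\ \ge\ q^{\sum_{i<j}k_i\ell_j}\binom{|k|}{k}_q\binom{|\ell|}{\ell}_q,
\]
which is precisely a $q$-analog of the classical Chu--Vandermonde inequality $\binom{m+p}{n+q}\ge\binom{m}{n}\binom{p}{q}$ cited in the preceding remark. Both sides are polynomials in $q$ with nonnegative coefficients, so it is natural to establish the bound by a term-by-term comparison of suitable generating functions.

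For $k\in\Z_+^n$ let $W(k)$ denote the set of words $w=w_1\cdots w_{|k|}$ over the alphabet $\{1,\ldots,n\}$ containing exactly $k_i$ letters equal to $i$, and set $c(w):=\#\{(s,t):1\le s<t\le|k|,\ w_s<w_t\}$. A routine induction on $|k|$ (via the standard recurrence for $q$-multinomial coefficients, obtained by conditioning on the position of the last occurrence of the letter $n$) gives the well-known identity
\[
\binom{|k|}{k}_q=\sum_{w\in W(k)}q^{c(w)},
\]
and the analogous formulas for $\ell$ and $k+\ell$ in place of $k$. Because $q>0$, every summand in each of these generating functions is nonnegative, which is what will make termwise comparison legitimate.

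The key step is to observe that concatenation $(u,v)\mapsto uv$ is an injection $W(k)\times W(\ell)\hookrightarrow W(k+\ell)$, and that
\[
c(uv)=c(u)+c(v)+\#\{(s,t):s\le|k|<t,\ u_s<v_{t-|k|}\}.
\]
The ``crossing'' contribution on the right depends only on the letter frequencies of $u$ and $v$: since $u$ contains $k_a$ copies of each letter $a$ and $v$ contains $\ell_b$ copies of each letter $b$, the crossing count equals $\sum_{a<b}k_a\ell_b=\sum_{i<j}k_i\ell_j$, a constant independent of $(u,v)$.

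Restricting the sum $\sum_{w\in W(k+\ell)}q^{c(w)}$ to the image of the concatenation map and invoking positivity then yields
\[
\binom{|k+\ell|}{k+\ell}_q\ \ge\ \sum_{u\in W(k)}\sum_{v\in W(\ell)}q^{c(uv)}\ =\ q^{\sum_{i<j}k_i\ell_j}\binom{|k|}{k}_q\binom{|\ell|}{\ell}_q,
\]
which is exactly \eqref{q-CV-ineq}. The only nonroutine ingredient is the combinatorial formula for $\binom{|k|}{k}_q$; everything else is index-tracking. (As an algebraic alternative, one could instead iterate the factorization $\binom{N}{k_1,\ldots,k_n}_q=\prod_{i=1}^{n-1}\binom{k_i+\cdots+k_n}{k_i}_q$ on all three sides and bound each $q$-binomial factor via the $q$-Chu--Vandermonde identity $\binom{A+B}{a+b}_q=\sum_{j}q^{(a+b-j)(A-j)}\binom{A}{j}_q\binom{B}{a+b-j}_q$ by keeping only the $j=a$ term; the exponents accumulate to $\sum_{i=1}^{n-1}k_i(\ell_{i+1}+\cdots+\ell_n)=\sum_{i<j}k_i\ell_j$.)
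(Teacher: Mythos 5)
Your proof is correct, and it takes a genuinely different route from the paper's. The paper proves \eqref{q-CV-ineq} by induction on $n$: the base case $n=2$ is extracted from the $q$-Chu--Vandermonde \emph{identity} (keeping a single nonnegative summand), and the inductive step applies the $n=2$ case to the pairs $(|k'|,k_n)$ and $(|\ell'|,\ell_n)$ and multiplies by the $(n-1)$-dimensional inequality. You instead appeal to MacMahon's interpretation of the $q$-multinomial coefficient as the inversion (here, co-inversion) generating function over rearrangement words, note that concatenation $W(k)\times W(\ell)\hookrightarrow W(k+\ell)$ is injective, and compute that the ``crossing'' statistic of $uv$ is the constant $\sum_{i<j}k_i\ell_j$ independent of $(u,v)$; restricting a sum with nonnegative terms to the image then gives the bound. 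This is arguably more illuminating: it identifies the correction factor $q^{\sum_{i<j}k_i\ell_j}$ precisely as the count of crossing (co-)inversions, and it avoids an explicit induction. Amusingly, the paper already cites exactly the combinatorial identity you need (Andrews, Theorem 3.6, used later in the proof of Lemma~\ref{lemma:x^k_norm} with the statistic $\rrm(\alpha)$), so your argument fits naturally into the paper's toolkit even though the author chose the algebraic route here. Your parenthetical alternative, iterating the factorization of the $q$-multinomial into $q$-binomials and bounding each via $q$-CV, is essentially a non-inductive repackaging of the paper's argument and is also valid. One small remark on hygiene: your statistic $c(w)$ counts co-inversions ($w_s<w_t$), whereas the conventional statement of MacMahon's theorem uses inversions ($w_s>w_t$); these give the same generating function via word reversal, and your crossing computation is internally consistent with the co-inversion convention, so nothing is wrong, but it is worth flagging lest a reader compare directly with the standard reference.
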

\begin{proof}
We use induction on $n$. For $n=2$, the $q$-analog of the Chu-Vandermonde formula
(see, e.g., \cite[2.1.2, Proposition 3]{Klim_Schm}) implies that
\[
\begin{split}
\binom{k_1+\ell_1+k_2+\ell_2}{k_1+\ell_1}_q
&=\sum_{j=0}^{k_1+k_2} \binom{k_1+k_2}{j}_q \binom{\ell_1+\ell_2}{k_1+\ell_1-j}_q
q^{j(\ell_2-k_1+j)}\\
&\ge \binom{k_1+k_2}{k_1}_q \binom{\ell_1+\ell_2}{\ell_1}_q q^{k_1 \ell_2}.
\end{split}
\]
This is exactly \eqref{q-CV-ineq} for $n=2$. Suppose now that $n\ge 3$, and, for each
$k=(k_1,\ldots ,k_n)\in\Z_+^n$, let $k'=(k_1,\ldots ,k_{n-1})\in\Z_+^{n-1}$.
By the induction hypothesis, we have
\begin{equation}
\label{CV_ind_n-1}
\frac{\bigl[|k'+\ell'|\bigr]_q!}{[k'+\ell']_q!} \ge \frac{\bigl[|k'|\bigr]_q!}{[k']_q!}
\frac{\bigl[|\ell'|\bigr]_q!}{[\ell']_q!}\, q^{\sum_{i<j\le n-1} k_i \ell_j}.
\end{equation}
Applying \eqref{q-CV-ineq} to the $2$-tuples $(|k'|,k_n)$ and $(|\ell'|,\ell_n)$, we get
\begin{equation}
\label{CV_ind_2}
\frac{\bigl[|k'|+k_n+|\ell'|+\ell_n\bigr]_q!}{\bigl[|k'|+|\ell'|\bigr]_q!\, [k_n+\ell_n]_q!}
\ge \frac{\bigl[|k'|+k_n\bigr]_q!}{\bigl[|k'|\bigr]_q!\, [k_n]_q!}
\frac{\bigl[|\ell'|+\ell_n\bigr]_q!}{\bigl[|\ell'|\bigr]_q!\, [\ell_n]_q!}\, q^{|k'|\ell_n}.
\end{equation}
Multiplying \eqref{CV_ind_n-1} by \eqref{CV_ind_2} yields \eqref{q-CV-ineq}.
\end{proof}

\begin{lemma}
\label{lemma:qball_submult}
Given $q\in\CC^\times$, let
\[
u_q(k)=|q|^{\sum_{i<j} k_i k_j} \qquad (k=(k_1,\ldots ,k_n)\in\Z_+^n).
\]
For each $\rho>0$, define a norm on $\cO_q^\reg(\CC^n)$ by
\begin{equation*}
\| f\|_{\BB,\rho} = \sum_{k\in\Z_+^n}
|c_k| \left(\frac{[k]_{|q|^2}!}{\bigl[ |k|\bigr]_{|q|^2}!}\right)^{1/2}\!\!\!\!
u_q(k) \rho^{|k|}
\qquad \Bigl(f=\sum_{k\in\Z_+^n} c_k x^k\in\cO_q^\reg(\CC^n)\Bigr).
\end{equation*}
Then $\|\cdot\|_{\BB,\rho}$ is submultiplicative.
\end{lemma}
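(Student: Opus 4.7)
The plan is to reduce the submultiplicativity of $\|\cdot\|_{\BB,\rho}$ to the $q$-Chu--Vandermonde-type inequality \eqref{q-CV-ineq} that has just been established. Since $\|\cdot\|_{\BB,\rho}$ is an $\ell^1$-type norm whose weights are nonnegative, it is enough (by the triangle inequality, applied coefficient by coefficient, together with absolute summability) to verify the monomial inequality
\[
\|x^k x^\ell\|_{\BB,\rho} \;\le\; \|x^k\|_{\BB,\rho}\,\|x^\ell\|_{\BB,\rho}
\qquad (k,\ell\in\Z_+^n).
\]

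First I would put $x^k x^\ell$ into normal form. Using the defining relations $x_i x_j=qx_j x_i$ for $i<j$ (equivalently $x_j x_i=q^{-1}x_i x_j$), one checks by a straightforward commutation count that
\[
x^k x^\ell \;=\; q^{-\sum_{i<j} k_j\ell_i}\, x^{k+\ell}.
\]
Hence $\|x^k x^\ell\|_{\BB,\rho} = |q|^{-\sum_{i<j} k_j\ell_i}\bigl([k+\ell]_{|q|^2}!/[|k+\ell|]_{|q|^2}!\bigr)^{1/2} u_q(k+\ell)\,\rho^{|k|+|\ell|}$. Next I would use the multiplicative identity
\[
u_q(k+\ell) \;=\; u_q(k)\,u_q(\ell)\,|q|^{\sum_{i<j}(k_i\ell_j+k_j\ell_i)},
\]
which is immediate from the definition $u_q(k)=|q|^{\sum_{i<j}k_ik_j}$ and the bilinearity of $(k,\ell)\mapsto\sum_{i<j}k_i\ell_j$. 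Substituting this into the monomial inequality and cancelling the common factor $u_q(k)u_q(\ell)\rho^{|k|+|\ell|}$, the claim reduces, after squaring, to
\[
\frac{[|k+\ell|]_{|q|^2}!}{[k+\ell]_{|q|^2}!}
\;\ge\;
\frac{[|k|]_{|q|^2}!\,[|\ell|]_{|q|^2}!}{[k]_{|q|^2}!\,[\ell]_{|q|^2}!}\,(|q|^2)^{\sum_{i<j}k_i\ell_j}.
\]
This is exactly inequality \eqref{q-CV-ineq} applied with the parameter $|q|^2>0$ in place of $q$, which is the case covered by the preceding lemma.

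The only nontrivial bookkeeping is the matching of exponents of $|q|$: there is one contribution $|q|^{-\sum_{i<j}k_j\ell_i}$ coming from rewriting $x^kx^\ell$ in normal form, and another contribution $|q|^{\sum_{i<j}(k_i\ell_j+k_j\ell_i)}$ coming from the factor $u_q(k+\ell)/(u_q(k)u_q(\ell))$; their sum collapses to $|q|^{\sum_{i<j}k_i\ell_j}$, which is precisely the weight appearing on the right-hand side of \eqref{q-CV-ineq} (with $q$ replaced by $|q|^2$). I expect this algebraic bookkeeping to be the only delicate point; everything else is formal. Summing the resulting monomial inequalities against $|c_k||d_\ell|$ for $f=\sum c_k x^k$ and $g=\sum d_\ell x^\ell$ yields $\|fg\|_{\BB,\rho}\le\|f\|_{\BB,\rho}\|g\|_{\BB,\rho}$ on $\cO_q^\reg(\CC^n)$, as required.
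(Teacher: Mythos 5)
Your proposal is correct and follows essentially the same route as the paper: reduce submultiplicativity to the monomial weight inequality $\|x^k x^\ell\|_{\BB,\rho}\le\|x^k\|_{\BB,\rho}\|x^\ell\|_{\BB,\rho}$, compute the normal form of $x^kx^\ell$ (your $q^{-\sum_{i<j}k_j\ell_i}$ equals the paper's $q^{-\sum_{i>j}k_i\ell_j}$ after reindexing), and reduce to \eqref{q-CV-ineq} with $|q|^2$ in place of $q$. The only surface difference is that you spell out the reduction to monomials directly, whereas the paper delegates that step to \cite[Lemma~5.7]{Pir_qfree}.
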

\begin{proof}
For each $k,\ell\in\Z_+^n$, let
\[
c(k,\ell)=q^{-\sum_{i>j} k_i \ell_j}.
\]
Then
\[
x^k x^\ell =c(k,\ell) x^{k+\ell}
\]
(see, e.g., \cite{Art_poly,Good_Letz}). By \cite[Lemma 5.7]{Pir_qfree},
$\|\cdot\|_{\BB,\rho}$ is submultiplicative if and only if for each $k,\ell\in\Z_+^n$
\begin{equation}
\label{qball_submult}
\left(\frac{[k+\ell]_{|q|^2}!}{\bigl[ |k+\ell|\bigr]_{|q|^2}!}\right)^{1/2}\!\!\!\! u_q(k+\ell)\, |c(k,\ell)|
\le \left(\frac{[k]_{|q|^2}!\, [\ell]_{|q|^2}!}%
{\bigl[ |k|\bigr]_{|q|^2}!\, \bigl[ |\ell|\bigr]_{|q|^2}!}\right)^{1/2}\!\!\!\! u_q(k) u_q(\ell).
\end{equation}
We have
\begin{align*}
u_q(k+\ell)\, |c(k,\ell)| &= |q|^{\sum_{i<j}(k_i+\ell_i)(k_j+\ell_j)} |q|^{-\sum_{i<j} k_j \ell_i}
=|q|^{\sum_{i<j} k_i k_j +\ell_i\ell_j+k_i\ell_j};\\
u_q(k) u_q(\ell)&=|q|^{\sum_{i<j} k_i k_j +\ell_i\ell_j}.
\end{align*}
Therefore \eqref{qball_submult} is equivalent to
\begin{equation}
\label{qball_submult_2}
\left(\frac{[k+\ell]_{|q|^2}!}{\bigl[ |k+\ell|\bigr]_{|q|^2}!}\right)^{1/2} \le
\left(\frac{[k]_{|q|^2}!\, [\ell]_{|q|^2}!}%
{\bigl[ |k|\bigr]_{|q|^2}!\, \bigl[ |\ell|\bigr]_{|q|^2}!}\right)^{1/2}\!\!\!\!
|q|^{-\sum_{i<j} k_i\ell_j}.
\end{equation}
Raising \eqref{qball_submult_2} to the power $-2$ yields~\eqref{q-CV-ineq} with $q$ replaced
by $|q|^2$. The rest is clear.
\end{proof}

\begin{theorem}
\label{thm:qball}
Let $q\in\CC^\times$, and let $r\in (0,+\infty]$. The Fr\'echet space
\begin{equation*}
\cO_q(\BB_r^n)=
\biggl\{ f=\sum_{k\in\Z_+^n} c_k x^k :
\| f\|_{\BB,\rho}=\sum_{k\in\Z_+^n}
|c_k| \left(\frac{[k]_{|q|^2}!}{\bigl[ |k|\bigr]_{|q|^2}!}\right)^{1/2}\!\!\!\!
u_q(k) \rho^{|k|}<\infty\; \forall \rho\in (0,r)\biggr\}
\end{equation*}
is a Fr\'echet-Arens-Michael algebra with respect to the multiplication uniquely determined
by $x_i x_j=q x_j x_i\; (i<j)$. Moreover, the norms $\|\cdot\|_{\BB,\rho}$
are submultiplicative on $\cO_q(\BB^n_r)$.
\end{theorem}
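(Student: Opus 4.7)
The plan is to reduce the theorem to the two earlier results already available: Lemma~\ref{lemma:power_spaces} for the Fr\'echet structure, and Lemma~\ref{lemma:qball_submult} for the submultiplicativity. Everything else will be a standard extension-by-continuity argument from a dense subalgebra.

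First, I would identify $\cO_q(\BB^n_r)$ with the space $\Lambda^1_r(\EE)$ of Lemma~\ref{lemma:power_spaces} by taking $E_k$ to be the one-dimensional Banach space $\CC\cdot x^k$ with norm $\| c\, x^k\| = |c|\bigl([k]_{|q|^2}!/\bigl[|k|\bigr]_{|q|^2}!\bigr)^{1/2}\, u_q(k)$. Under this identification $\|\cdot\|_{\BB,\rho}$ coincides with $\|\cdot\|_\rho^{(1)}$, so Lemma~\ref{lemma:power_spaces} immediately yields that $\cO_q(\BB^n_r)$ is a Fr\'echet space.

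Next, I would observe that $\cO_q^\reg(\CC^n)$ sits inside $\cO_q(\BB^n_r)$ as a dense subalgebra: every polynomial trivially has finite norm, and for $f = \sum_k c_k x^k \in \cO_q(\BB^n_r)$ the partial sums $\sum_{|k|\le N} c_k x^k$ converge to $f$ in each $\|\cdot\|_{\BB,\rho}$ because the relevant tail is itself a tail of the absolutely convergent series defining $\| f\|_{\BB,\rho}$. By Lemma~\ref{lemma:qball_submult}, each $\|\cdot\|_{\BB,\rho}$ is submultiplicative on $\cO_q^\reg(\CC^n)$, so multiplication is jointly continuous there for the Fr\'echet topology inherited from $\cO_q(\BB^n_r)$. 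The standard extension principle for jointly continuous bilinear maps on metrizable locally convex spaces then produces a unique jointly continuous bilinear extension to $\cO_q(\BB^n_r)$; associativity, unitality, the defining relations $x_ix_j = qx_jx_i$ for $i<j$, and the submultiplicativity of each $\|\cdot\|_{\BB,\rho}$ all pass to the completion by continuity.

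The only substantive step is Lemma~\ref{lemma:qball_submult}, which has already been proved using the $q$-Chu--Vandermonde inequality. What remains is a minor verification that the extended multiplication retains the expected ``quantum Cauchy product'' form $fg = \sum_{k,\ell} c_k d_\ell\, q^{-\sum_{i>j} k_i \ell_j}\, x^{k+\ell}$ for $f=\sum_k c_k x^k$ and $g=\sum_\ell d_\ell x^\ell$; this follows by passing to the limit in the corresponding polynomial identity for the partial sums $f_N g_N$, with absolute convergence supplied by the submultiplicativity estimate $\| fg\|_{\BB,\rho}\le\| f\|_{\BB,\rho}\| g\|_{\BB,\rho}$ and a Fubini-type rearrangement. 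I do not expect any real obstacle: the theorem is essentially a bookkeeping statement wrapping together Lemmas~\ref{lemma:power_spaces} and~\ref{lemma:qball_submult}, with the genuine content already in the $q$-Chu--Vandermonde inequality \eqref{q-CV-ineq}.
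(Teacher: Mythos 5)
Your proposal is correct and takes essentially the same route as the paper, which simply cites Lemma~\ref{lemma:qball_submult} as the entire proof; the paper treats the Fr\'echet structure of the power series space and the extension-by-continuity of the multiplication from $\cO_q^\reg(\CC^n)$ as standard and leaves them implicit, whereas you spell them out via Lemma~\ref{lemma:power_spaces} and the usual completion argument. No difference in substance.
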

\begin{proof}
Immediate from Lemma~\ref{lemma:qball_submult}.
\end{proof}

\begin{definition}
\label{def:qball}
The Fr\'echet algebra $\cO_q(\BB^n_r)$ will be called
the {\em algebra of holomorphic functions on the quantum $n$-ball of radius $r$}.
\end{definition}

In other words, $\cO_q(\BB^n_r)$ is the completion of $\cO_q^\reg(\CC^n)$
with respect to the family $\{\|\cdot\|_{\BB,\rho} : \rho\in (0,r)\}$ of submultiplicative norms.
Letting $q=1$ in Definition~\ref{def:qball} and comparing with~\eqref{ball_pow_rep}, we see that
the map $\cO(\BB^n_r)\to\cO_1(\BB^n_r)$ taking each $f\in\cO(\BB^n_r)$ to its Taylor
series at $0$ is a Fr\'echet algebra isomorphism.

\begin{remark}
At the moment, we still do not know whether $\cO_q(\BB^n_\infty)=\cO_q(\CC^n)$.
This will be proved in Corollary~\ref{cor:B_inf=C^n}.
\end{remark}

\begin{remark}
\label{rem:no_mult}
One may wonder why we do not try to define $q$-deformed multiplications on the same
Fr\'echet spaces $\cO(\DD^n_r)$ and $\cO(\BB^n_r)$. The answer is that such multiplications
do not exist in general. To be more precise, if $|q|<1$ and $n\ge 2$,
then there is no continuous multiplication $\star$
on $\cO(\DD^n_r)$ such that $x_i x_j=x_i\star x_j=qx_j \star x_i$ for all $i<j$
(where $x_1,\ldots ,x_n$ are the coordinates on $\CC^n$).
Indeed, assume that such a multiplication exists. Then for each $\rho\in (0,r)$ there exist
$C>0$ and $s\in (0,r)$ such that $\| f\star g\|_\rho\le C\| f\|_s \| g\|_s$
($f,g\in\cO(\DD^n_r)$). In particular, for each $m\in\N$ we have
$\| x_2^m \star x_1^m\|_\rho\le C\| x_2^m\|_s \| x_1^m\|_s=Cs^{2m}$.
Since $\| x_2^m \star x_1^m\|_\rho=\| q^{-m^2} x_1^m x_2^m\|_\rho=|q|^{-m^2}\rho^{2m}$,
we conclude that $|q|^{-m^2}\le C(s/\rho)^{2m}$ for all $m$, which is a contradiction.
A similar argument works for $\cO(\BB^n_r)$.
\end{remark}

Our next goal is to extend the algebra isomorphism
\begin{equation}
\label{tau_q_qinv}
\tau\colon\cO_q^\reg(\CC^n)\to\cO_{q^{-1}}^\reg(\CC^n),\quad x_i\mapsto x_{n-i},
\end{equation}
to $\cO_q(\DD^n_r)$ and $\cO_q(\BB^n_r)$. To this end, we need a lemma.

\begin{lemma}
\label{lemma:binom_q_qinv}
For each $q>0$ and each $k=(k_1,\ldots ,k_n)\in\Z_+^n$, we have
\[
\frac{[k]_q!}{\bigl[ |k|\bigr]_q!}\, q^{\sum_{i<j} k_i k_j}
= \frac{[k]_{q^{-1}}!}{\bigl[ |k|\bigr]_{q^{-1}}!}.
\]
\end{lemma}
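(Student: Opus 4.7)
The plan is a direct computation: track how each $q$-factorial transforms under $q \mapsto q^{-1}$ and compare exponents.

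First I would reverse the order of summation in the definition of $[m]_{q^{-1}}$:
\[
[m]_{q^{-1}} = 1 + q^{-1} + \cdots + q^{-(m-1)} = q^{-(m-1)}(q^{m-1} + \cdots + 1) = q^{-(m-1)}[m]_q.
\]
Taking the product from $j=1$ to $m$ then gives
\[
[m]_{q^{-1}}! = \prod_{j=1}^{m} q^{-(j-1)} [j]_q = q^{-\binom{m}{2}} [m]_q!,
\]
with the convention that $\binom{0}{2}=\binom{1}{2}=0$, so that the formula also holds for $m=0,1$.

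Next I would apply this identity componentwise. For $k = (k_1,\ldots,k_n)\in\Z_+^n$, the definition $[k]_{q^{-1}}! = \prod_i [k_i]_{q^{-1}}!$ yields $[k]_{q^{-1}}! = q^{-\sum_i \binom{k_i}{2}} [k]_q!$, while $[|k|]_{q^{-1}}! = q^{-\binom{|k|}{2}} [|k|]_q!$. Dividing, the right-hand side of the claim becomes
\[
\frac{[k]_{q^{-1}}!}{\bigl[|k|\bigr]_{q^{-1}}!} = \frac{[k]_q!}{\bigl[|k|\bigr]_q!}\, q^{\binom{|k|}{2} - \sum_i \binom{k_i}{2}}.
\]

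The remaining step is the elementary combinatorial identity
\[
\binom{|k|}{2} - \sum_{i=1}^{n} \binom{k_i}{2} = \sum_{i<j} k_i k_j,
\]
which follows by expanding $|k|^2 = \bigl(\sum_i k_i\bigr)^2 = \sum_i k_i^2 + 2\sum_{i<j} k_i k_j$ and rearranging the factor of $1/2$. Substituting this exponent into the display above gives exactly the left-hand side of the lemma. There is no real obstacle here; the only point to be careful about is handling $m=0,1$ in the first step so that the formula $[m]_{q^{-1}}! = q^{-\binom{m}{2}}[m]_q!$ covers all components of $k$ and the case $|k|=0$ uniformly.
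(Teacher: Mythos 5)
Your proof is correct and follows essentially the same route as the paper: both start from the identity $[m]_{q^{-1}} = q^{-(m-1)}[m]_q$ (equivalently $[m]_q = q^{m-1}[m]_{q^{-1}}$), pass to factorials, and then reduce the exponent comparison to the identity $\binom{|k|}{2} - \sum_i \binom{k_i}{2} = \sum_{i<j} k_i k_j$.
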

\begin{proof}
For each $m\in\Z_+$, we have $[m]_q=q^{m-1}[m]_{q^{-1}}$, and so
\[
[m]_q!=q^{\tfrac{m(m-1)}{2}} [m]_{q^{-1}}!.
\]
Therefore
\[
\begin{split}
\frac{[k]_q!}{\bigl[ |k|\bigr]_q!}\, q^{\sum_{i<j} k_i k_j}
&=\frac{q^{\sum_i \tfrac{k_i(k_i-1)}{2}} [k]_{q^{-1}}!}%
{q^{\tfrac{|k|(|k|-1)}{2}} \bigl[|k|\bigr]_{q^{-1}}!}\, q^{\sum_{i<j} k_i k_j}\\
&=\frac{q^{\tfrac{\sum_i k_i^2-\sum_i k_i}{2}} [k]_{q^{-1}}!}%
{q^{\tfrac{\sum_i k_i^2+2\sum_{i<j}k_i k_j-\sum_i k_i}{2}} \bigl[|k|\bigr]_{q^{-1}}!}%
\, q^{\sum_{i<j} k_i k_j}
=\frac{[k]_{q^{-1}}!}{\bigl[ |k|\bigr]_{q^{-1}}!}. \qedhere
\end{split}
\]
\end{proof}

\begin{corollary}
\label{cor:qball_norms_2}
For each $q\in\CC^\times$, each $f\in\cO_q(\BB^n_r)$, and each $\rho\in (0,r)$, we have
\[
\| f\|_{\BB,\rho}=\sum_{k\in\Z_+^n}
|c_k| \left(\frac{[k]_{|q|^{-2}}!}{\bigl[ |k|\bigr]_{|q|^{-2}}!}\right)^{1/2}\!\!\!\!
\rho^{|k|}.
\]
\end{corollary}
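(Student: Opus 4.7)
The plan is to read off the claim directly from Lemma~\ref{lemma:binom_q_qinv} by specializing that lemma at the positive real number $|q|^2$. Writing out the definition of $\|\cdot\|_{\BB,\rho}$ from Theorem~\ref{thm:qball}, what needs to be shown is the identity
\[
\left(\frac{[k]_{|q|^2}!}{\bigl[|k|\bigr]_{|q|^2}!}\right)^{1/2} u_q(k)
= \left(\frac{[k]_{|q|^{-2}}!}{\bigl[|k|\bigr]_{|q|^{-2}}!}\right)^{1/2}
\]
for every $k \in \Z_+^n$, where $u_q(k)=|q|^{\sum_{i<j} k_i k_j}$.

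First I would square both sides, reducing the problem to showing
\[
\frac{[k]_{|q|^2}!}{\bigl[|k|\bigr]_{|q|^2}!}\, |q|^{2\sum_{i<j} k_i k_j}
= \frac{[k]_{|q|^{-2}}!}{\bigl[|k|\bigr]_{|q|^{-2}}!}.
\]
Setting $p=|q|^2>0$, the exponent $2\sum_{i<j}k_i k_j$ becomes $\sum_{i<j}k_i k_j$ with base $p$, so this identity reads exactly
\[
\frac{[k]_p!}{\bigl[|k|\bigr]_p!}\, p^{\sum_{i<j} k_i k_j} = \frac{[k]_{p^{-1}}!}{\bigl[|k|\bigr]_{p^{-1}}!},
\]
which is Lemma~\ref{lemma:binom_q_qinv} applied to $p$.

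Then I would take square roots (both sides are positive) and substitute into the series defining $\|f\|_{\BB,\rho}$: the factor $u_q(k)$ combines with the quantum multinomial at base $|q|^2$ to give exactly the quantum multinomial at base $|q|^{-2}$, yielding the asserted formula termwise for every $k$. Since both expressions for $\|f\|_{\BB,\rho}$ are sums of nonnegative terms indexed by $k\in\Z_+^n$, the equality of termwise summands gives equality of the sums (finite or infinite), completing the proof. There is no real obstacle here — the only nontrivial ingredient is the algebraic identity of Lemma~\ref{lemma:binom_q_qinv}, which is already in hand.
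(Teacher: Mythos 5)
Your proof is correct and follows exactly the same route as the paper, which simply says "Apply Lemma~\ref{lemma:binom_q_qinv} with $q$ replaced by $|q|^2$." You have merely spelled out the squaring, the substitution $p=|q|^2$, and the termwise comparison of the series, all of which are the intended details.
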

\begin{proof}
Apply Lemma \ref{lemma:binom_q_qinv} with $q$ replaced by $|q|^2$.
\end{proof}

\begin{prop}
\label{prop:q-qinv}
For each $q\in\CC^\times$ and each $r\in (0,+\infty]$, there exist topological algebra isomorphisms
\begin{gather*}
\tau_\DD\colon\cO_q(\DD^n_r)\to\cO_{q^{-1}}(\DD^n_r),\quad x_i\mapsto x_{n-i};\\
\tau_\BB\colon\cO_q(\BB^n_r)\to\cO_{q^{-1}}(\BB^n_r),\quad x_i\mapsto x_{n-i}.
\end{gather*}
Moreover, for each $f\in\cO_q(\DD^n_r)$, each $g\in\cO_q(\BB^n_r)$, and each $\rho\in (0,r)$, we have
\begin{equation}
\label{q-qinv_norms}
\| \tau_\DD(f)\|_{\DD,\rho}=\| f\|_{\DD,\rho};\quad
\| \tau_\BB(g)\|_{\BB,\rho}=\| g\|_{\BB,\rho}.
\end{equation}
\end{prop}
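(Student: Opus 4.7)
My plan is to verify the two norm identities~\eqref{q-qinv_norms} on the dense subalgebra $\cO_q^\reg(\CC^n)$; once established, they extend by continuity and density to the Fr\'echet completions, and the resulting maps are topological algebra isomorphisms (with inverse given by the same reversal prescription applied in the opposite direction). Since $\tau$ is already a $\CC$-algebra isomorphism $\cO_q^\reg(\CC^n)\to\cO_{q^{-1}}^\reg(\CC^n)$ by~\eqref{tau_q_qinv}, only the norm identities require real work.

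The first step is to compute $\tau$ on a standard monomial $x^k=x_1^{k_1}\cdots x_n^{k_n}$. Applying $\tau$ and reversing the factors gives a product of generators of $\cO_{q^{-1}}^\reg(\CC^n)$ in anti-normal order, which must be rewritten in normal form $x^{\tilde k}$, where $\tilde k=(k_n,\ldots,k_1)$. Since in $\cO_{q^{-1}}^\reg(\CC^n)$ we have $x_jx_i=q\,x_ix_j$ for $i<j$, each inversion contributes a factor $q$, and a direct count yields $\sum_{i<j}k_ik_j$ inversions, so
\begin{equation*}
\tau(x^k)=q^{\sum_{i<j}k_ik_j}\,x^{\tilde k}\qquad\text{in }\cO_{q^{-1}}^\reg(\CC^n).
\end{equation*}

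For the polydisk norm this gives $\|\tau(x^k)\|_{\DD,\rho}=|q|^{\sum_{i<j}k_ik_j}\,w_{q^{-1}}(\tilde k)\,\rho^{|k|}$. Using $|\tilde k|=|k|$ and $\sum_{i<j}\tilde k_i\tilde k_j=\sum_{i<j}k_ik_j$, a short inspection of the three subcases $|q|>1$, $|q|=1$, $|q|<1$ in~\eqref{w_q} shows that $|q|^{\sum_{i<j}k_ik_j}\,w_{q^{-1}}(\tilde k)=w_q(k)$, so $\|\tau(x^k)\|_{\DD,\rho}=\|x^k\|_{\DD,\rho}$. For the ball norm, working from Definition~\ref{def:qball} and using $[\tilde k]_{|q|^{-2}}!=[k]_{|q|^{-2}}!$, one obtains $\|\tau(x^k)\|_{\BB,\rho}=|q|^{\sum_{i<j}k_ik_j}\bigl([k]_{|q|^{-2}}!/\bigl[|k|\bigr]_{|q|^{-2}}!\bigr)^{1/2}\,u_{q^{-1}}(\tilde k)\,\rho^{|k|}$; the $|q|$-prefactor cancels against $u_{q^{-1}}(\tilde k)=|q|^{-\sum_{i<j}k_ik_j}$, leaving an expression which, by Corollary~\ref{cor:qball_norms_2}, is precisely $\|x^k\|_{\BB,\rho}$.

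Since both $\|\cdot\|_{\DD,\rho}$ and $\|\cdot\|_{\BB,\rho}$ are $\ell^1$-type sums over the monomial basis and $k\mapsto\tilde k$ is a bijection on $\Z_+^n$, the monomial identities immediately yield $\|\tau(f)\|_{?,\rho}=\|f\|_{?,\rho}$ for every $f\in\cO_q^\reg(\CC^n)$ and every $?\in\{\DD,\BB\}$, completing the proof after passage to the completions. The main obstacle is the bookkeeping of $q$-factors: in the polydisk case the prefactor $|q|^{\sum_{i<j}k_ik_j}$ from the reordering must combine correctly with $w_{q^{-1}}$ to produce $w_q$, which forces the case split according to whether $|q|$ is greater than, equal to, or less than~$1$; in the ball case the analogous prefactor cancels against $u_{q^{-1}}$, and one must recognize the resulting expression as the ``alternative'' form of $\|x^k\|_{\BB,\rho}$ supplied by Corollary~\ref{cor:qball_norms_2}.
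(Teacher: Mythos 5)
Your proof is correct and follows essentially the same route as the paper's: reduce to the dense subalgebra $\cO_q^\reg(\CC^n)$, compute $\tau(x^k)=q^{\sum_{i<j}k_ik_j}x^{\tilde k}$ with $\tilde k=(k_n,\ldots,k_1)$, and verify the norm identities on monomials using the interplay between the reordering factor, $w_{q^{-1}}$ (resp.\ $u_{q^{-1}}$), and Lemma~\ref{lemma:binom_q_qinv}/Corollary~\ref{cor:qball_norms_2}. The only cosmetic differences are that the paper avoids your case split in the polydisk calculation by noting one may assume $|q|\le 1$ without loss of generality (since $\tau_\BB$ and $\tau_\DD$ swap $q$ and $q^{-1}$), and that in the ball calculation it applies Corollary~\ref{cor:qball_norms_2} to the $\cO_{q^{-1}}$ side rather than the $\cO_q$ side, which amounts to the same cancellation you carry out.
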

\begin{proof}
For convenience, let us denote the norm $\|\cdot\|_{\DD,\rho}$
on $\cO_q(\DD^n_r)$ by $\|\cdot\|_{\DD,q,\rho}$. Similarly, we write $\|\cdot\|_{\BB,q,\rho}$
for the norm $\|\cdot\|_{\BB,\rho}$ on $\cO_q(\BB^n_r)$.
In view of~\eqref{tau_q_qinv}, it suffices to show that
for each $f\in\cO_q^\reg(\CC^n)$ and each $\rho>0$, we have
\begin{align}
\label{tau_isometr_D}
\| \tau(f)\|_{\DD,q^{-1},\rho}&=\| f\|_{\DD,q,\rho};\\
\label{tau_isometr_B}
\| \tau(f)\|_{\BB,q^{-1},\rho}&=\| f\|_{\BB,q,\rho}.
\end{align}
Without loss of generality, we may assume that $|q|\le 1$.

Observe that, for each $k=(k_1,\ldots ,k_n)\in\Z_+^n$, we have
\[
\tau(x^k)=x_n^{k_1}\cdots x_1^{k_n}=q^{\sum_{i<j} k_i k_j} x_1^{k_n}\cdots x_n^{k_1}.
\]
Therefore, for each $f=\sum_k c_k x^k\in\cO_q^\reg(\CC^n)$,
\[
\| \tau(f)\|_{\DD,q^{-1},\rho}
=\sum_k |c_k| w_q(k) \rho^{|k|} = \| f\|_{\DD,q,\rho}.
\]
This proves \eqref{tau_isometr_D}.
Similarly, using Corollary~\ref{cor:qball_norms_2}, we obtain
\[
\| \tau(f)\|_{\BB,q^{-1},\rho}
=\sum_k |c_k| u_q(k) \left(\frac{[k]_{|q|^2}!}{\bigl[ |k|\bigr]_{|q|^2}!}\right)^{1/2}%
\!\!\!\!\rho^{|k|}=\| f\|_{\BB,q,\rho}.
\]
This proves \eqref{tau_isometr_B} and completes the proof.
\end{proof}

\section{Quantum ball \`a la Vaksman}
\label{sect:qball_Vaks}

In this section we establish a relation between $\cO_q(\BB^n_r)$ and
L.\,L.\,Vaksman's algebra $C_q(\bar\BB^n)$, which is a natural
$q$-analog of the algebra $C(\bar\BB^n)$ of continuous functions on
the closed unit ball $\bar\BB^n=\bar\BB^n_1$~\cite{Vaks_max}.
To motivate the construction, let us start with the classical situation.
Let $\Fun(\CC^n)$ denote the algebra of all $\CC$-valued functions on $\CC^n$.
There is a natural involution on $\Fun(\CC^n)$ given by $f^*(z)=\ol{f(z)}$.
Let $\Pol(\CC^n)$ denote the $*$-subalgebra of $\Fun(\CC^n)$
generated by the coordinates $z_1,\ldots ,z_n$ on $\CC^n$.
Clearly, we have an algebra isomorphism
$\Pol(\CC^n)\cong\CC[z_1,\ldots ,z_n,\bar z_1,\ldots ,\bar z_n]$.
By the Stone-Weierstrass theorem, the completion of $\Pol(\CC^n)$ with respect
to the uniform norm $\| f\|_\BB^\infty=\sup_{z\in\bar\BB^n} |f(z)|$ is
$C(\bar\BB^n)$.
For each $\rho>0$ and each $f\in \CC[z_1,\ldots ,z_n]$, let
\begin{equation*}
\| f\|_{\BB,\rho}^\infty=\sup_{z\in\bar\BB^n_\rho} |f(z)| = \|\gamma_\rho(f)\|_\BB^\infty,
\end{equation*}
where $\gamma_\rho$ is the automorphism of $\CC[z_1,\ldots ,z_n]$ uniquely
determined by $\gamma_\rho(z_i)=\rho z_i$  ($i=1,\ldots ,n$).
Clearly, the completion of $\CC[z_1,\ldots ,z_n]$ with respect to
the family $\{\|\,\cdot\,\|_{\BB,\rho} : \rho\in (0,r)\}$ of norms is topologically isomorphic
to $\cO(\BB^n_r)$.

Now let us ``quantize'' the above data.
Fix $q\in (0,1)$, and denote by $\Pol_q(\CC^n)$
the $*$-algebra generated (as a $*$-algebra) by $n$ elements $x_1,\ldots ,x_n$
subject to the relations
\begin{equation}
\label{tw_CCR}
\begin{aligned}
x_i x_j&=q x_j x_i \quad (i<j);\\
x_i^* x_j&= q x_j x_i^* \quad (i\ne j);\\
x_i^* x_i&=q^2 x_i x_i^*+(1-q^2)\Bigl(1-\sum_{k>i} x_k x_k^*\Bigr).
\end{aligned}
\end{equation}
Clearly, for $q=1$ we have $\Pol_q(\CC^n)\cong\Pol(\CC^n)$.
The algebra $\Pol_q(\CC^n)$ was introduced by W.\,Pusz and S.\,L.\,Woronowicz \cite{PW},
although they used different $*$-generators $a_1,\ldots ,a_n$ given by
$a_i=(1-q^2)^{-1/2} x_i^*$. Relations \eqref{tw_CCR} divided by $1-q^2$ and
written in terms of the $a_i$'s
are called the ``twisted canonical commutation relations'', and the algebra $A_q=\Pol_q(\CC^n)$
defined in terms of the $a_i$'s is sometimes called the ``quantum Weyl algebra''
(see, e.g., \cite{WZ,Alev,Jordan,Klim_Schm}).
Note that, while $\Pol_q(\CC^n)$ becomes $\Pol(\CC^n)$ for $q=1$, $A_q$ becomes
the Weyl algebra. The idea to use the generators $x_i$
instead of the $a_i$'s and to consider $\Pol_q(\CC^n)$ as a $q$-analog of $\Pol(\CC^n)$
is probably due to Vaksman \cite{Vaks_splet}; the one-dimensional case
was considered in \cite{Klim_Lesn_disk}.
The algebra $\Pol_q(\CC^n)$ serves as a basic example in the general theory
of quantum bounded symmetric domains developed by Vaksman and his
collaborators (see \cite{Vaks_sborn,Vaks_book} and references therein).

Let $H$ be a Hilbert space with an orthonormal basis
$\{ e_k : k\in \Z_+^n\}$. Following \cite{PW}, for each
$k=(k_1,\ldots ,k_n)\in\Z_+^n$ we will write $|k_1,\ldots ,k_n\ra$
for $e_k$. As was proved by Pusz and Woronowicz \cite{PW}, there exists
a faithful irreducible $*$-representation $\pi$ of $\Pol_q(\CC^n)$ on $H$ uniquely
determined by
\begin{gather*}
\pi(x_j)e_k=\sqrt{1-q^2} \sqrt{[k_j+1]_{q^2}}\, q^{\sum_{i>j}k_i}
|k_1,\ldots ,k_j+1,\ldots ,k_n\ra\\
(j=1,\ldots ,n,\; k=(k_1,\ldots ,k_n)\in\Z_+^n).
\end{gather*}
The completion of $\Pol_q(\CC^n)$ with respect to the operator norm $\| a\|_\op=\| \pi(a)\|$
is denoted by $C_q(\bar\BB^n)$ and is called the {\em algebra of continuous functions
on the closed quantum ball} \cite{Vaks_max}; see also \cite{PW,Prosk_Sam}.

Observe now that the subalgebra of $\Pol_q(\CC^n)$ generated by $x_1,\ldots ,x_n$
is exactly $\cO_q^\reg(\CC^n)$. For each $\rho>0$, let $\gamma_\rho$ be the automorphism
of $\cO_q^\reg(\CC^n)$ uniquely
determined by $\gamma_\rho(x_i)=\rho x_i\; (i=1,\ldots ,n)$.
Define a submultiplicative norm $\|\cdot\|_{\BB,\rho}^\infty$ on
$\cO_q^\reg(\CC^n)$ by
\begin{equation}
\label{norms_qball_op}
\| a\|_{\BB,\rho}^\infty=\|\gamma_\rho(a)\|_\op \qquad (a\in\cO_q^\reg(\CC^n)).
\end{equation}
The completion of $\cO_q^\reg(\CC^n)$ with respect to the family
$\{\|\cdot\|_{\BB,\rho}^\infty : \rho\in (0,r)\}$
of norms will be denoted by $\cO_q^\rV(\BB^n_r)$
(the superscript ``$\rV$'' is for ``Vaksman'').
It follows from the discussion at the beginning of this section that
$\cO_q^\rV(\BB^n_r)$ is indeed a natural $q$-analog of $\cO(\BB^n_r)$.

The main result of this section is the following theorem.

\begin{theorem}
\label{thm:poly_vs_ball}
For each $q\in (0,1)$ and each $r\in (0,+\infty]$,
there exists a topological algebra isomorphism
\[
\cO_q^\rV(\BB^n_r)\lriso\cO_q(\BB^n_r), \quad x_i\mapsto x_i\quad (i=1,\ldots ,n).
\]
\end{theorem}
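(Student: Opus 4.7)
The plan is to show that the two families of submultiplicative norms, $\{\|\cdot\|_{\BB,\rho}:\rho\in(0,r)\}$ from Theorem~\ref{thm:qball} and $\{\|\cdot\|_{\BB,\rho}^\infty:\rho\in(0,r)\}$ from \eqref{norms_qball_op}, define the same locally convex topology on the dense subalgebra $\cO_q^\reg(\CC^n)$. Once the equivalence is established, the identity map on $\cO_q^\reg(\CC^n)$ extends to a topological isomorphism of Fr\'echet algebras, automatically preserving the relations $x_ix_j=qx_jx_i$.

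The starting point is an explicit description of $\pi(x^k)$ on the canonical basis. Iterating the Pusz-Woronowicz formulas shows that $\pi(x^k)$ acts as a weighted shift $\pi(x^k)e_m=c_{m,k}e_{m+k}$, where
\[
|c_{m,k}|=\sqrt{\prod_{i=1}^n\prod_{s=m_i+1}^{m_i+k_i}(1-q^{2s})}\cdot q^{\sum_{j<i}k_jm_i}\cdot u_q(k).
\]
Since $0<q<1$ makes each factor on the right at most $1$, one gets $\|\pi(x^k)\|_\op=\sup_m|c_{m,k}|\le u_q(k)$. At $m=0$ the first factor equals $\sqrt{\prod_i(q^2;q^2)_{k_i}}\ge(q^2;q^2)_\infty^{n/2}$, so in fact $\|\pi(x^k)\|_\op$ is comparable to $u_q(k)$.

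The easy inequality $\|a\|_{\BB,\rho}^\infty\le C\|a\|_{\BB,\rho}$ now follows from the triangle inequality, the upper bound just obtained, and Theorem~\ref{thm:poly_vs_ball_2}: for $a=\sum_k c_k x^k\in\cO_q^\reg(\CC^n)$,
\[
\|a\|_{\BB,\rho}^\infty\le\sum_k|c_k|\rho^{|k|}\|\pi(x^k)\|_\op\le\sum_k|c_k|u_q(k)\rho^{|k|}=\|a\|_{\DD,\rho}\le(q^2;q^2)_\infty^{-n/2}\|a\|_{\BB,\rho}.
\]

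The reverse inequality is the main obstacle, because $\|\cdot\|_{\BB,\rho'}$ is of $\ell^1$-type while any operator-norm lower bound is naturally $\ell^2$-type. The key is orthogonality: the vectors $\pi(x^k)e_0=c_{0,k}e_k$ are pairwise orthogonal for distinct $k$, so
\[
(\|a\|_{\BB,\rho}^\infty)^2\ge\|\pi(\gamma_\rho(a))e_0\|^2=\sum_k|c_k|^2|c_{0,k}|^2\rho^{2|k|}\ge(q^2;q^2)_\infty^n\sum_k|c_k|^2u_q(k)^2\rho^{2|k|}.
\]
To bridge the $\ell^1$/$\ell^2$ gap I apply Cauchy-Schwarz at a strictly smaller radius $\rho'<\rho<r$, paying for the loss by the convergent series $\sum_k(\rho'/\rho)^{2|k|}=(1-(\rho'/\rho)^2)^{-n}$:
\[
\|a\|_{\BB,\rho'}\le\sum_k|c_k|u_q(k)(\rho')^{|k|}\le\Bigl(\sum_k|c_k|^2u_q(k)^2\rho^{2|k|}\Bigr)^{1/2}\Bigl(\sum_k(\rho'/\rho)^{2|k|}\Bigr)^{1/2},
\]
where the first step uses $\sqrt{[k]_{q^2}!/[|k|]_{q^2}!}\le1$ from Lemma~\ref{lemma:binom_est}. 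This yields $\|a\|_{\BB,\rho'}\le C(\rho,\rho')\|a\|_{\BB,\rho}^\infty$, completing the equivalence of the two norm families; the orthogonality of the vectors $\pi(x^k)e_0$ is what makes the lower bound possible at all, and the shrinking-radius Cauchy-Schwarz step is what renders it usable.
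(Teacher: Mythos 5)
Your argument is correct and follows essentially the same route as the paper: the upper bound $\|\cdot\|_{\BB,\rho}^\infty\le\|\cdot\|_{\DD,\rho}$, the vacuum-vector lower bound $(\|a\|_{\BB,\rho}^\infty)^2\ge(q^2;q^2)_\infty^n\sum_k|c_k|^2u_q(k)^2\rho^{2|k|}$, the Cauchy--Schwarz step with a strictly smaller radius to bridge $\ell^1$ and $\ell^2$, and the appeal to Theorem~\ref{thm:poly_vs_ball_2} to identify $\cO_q(\DD^n_r)$ with $\cO_q(\BB^n_r)$ are exactly the ingredients of Lemmas~\ref{lemma:vacuum}--\ref{lemma:op-l1_equiv} and the subsequent proof. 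The only variation is at the upper bound: you compute the weighted-shift coefficients $c_{m,k}$ of $\pi(x^k)$ explicitly and observe $\sup_m|c_{m,k}|\le u_q(k)$, whereas the paper only needs $\|x_i\|_\op\le 1$ together with the maximality property of $\|\cdot\|_{\DD,1}$ \cite[Lemma~5.10]{Pir_qfree}; both are valid, and your explicit formula for $c_{m,k}$ subsumes Lemma~\ref{lemma:vacuum} as the case $m=0$.
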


The proof of Theorem~\ref{thm:poly_vs_ball} will be divided into several lemmas.

\begin{lemma}
\label{lemma:vacuum}
For each $k\in\Z_+^n$, we have
\begin{equation*}
\pi(x^k)e_0=\sqrt{[k]_{q^2}!}\, (1-q^2)^{\tfrac{|k|}{2}} w_q(k) e_k.
\end{equation*}
\end{lemma}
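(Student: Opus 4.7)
The plan is to compute $\pi(x^k)e_0$ directly by applying the generators $x_n, x_{n-1}, \ldots, x_1$ in turn, reading off the scalar factors from the Pusz-Woronowicz formula. Since $x^k = x_1^{k_1}\cdots x_n^{k_n}$, I will show by downward induction on $j \in \{1,\ldots,n\}$ that
\[
\pi(x_j^{k_j}\cdots x_n^{k_n})e_0
= (1-q^2)^{(k_j+\cdots+k_n)/2}\sqrt{[k_j]_{q^2}!\cdots[k_n]_{q^2}!}\,
q^{\sum_{j\le a<b\le n} k_a k_b}\, |0,\ldots,0,k_j,\ldots,k_n\rangle,
\]
from which the theorem is the case $j=1$, since $\sum_{1\le a<b\le n} k_a k_b$ is precisely the exponent in $w_q(k)$ defined by \eqref{w_q} for $|q|<1$.

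For the inductive step I would fix $j$ and iterate the defining formula
\[
\pi(x_j)|m_1,\ldots,m_n\rangle
= \sqrt{1-q^2}\sqrt{[m_j+1]_{q^2}}\,q^{\sum_{i>j} m_i}\,
|m_1,\ldots,m_j+1,\ldots,m_n\rangle,
\]
applied to the vector $|0,\ldots,0,k_{j+1},\ldots,k_n\rangle$. The coordinates $m_i$ for $i>j$ are fixed at $k_i$ throughout, so each of the $k_j$ successive applications of $\pi(x_j)$ picks up the \emph{same} scalar $q^{k_{j+1}+\cdots+k_n}$ from the exponent, contributing a total of $q^{k_j(k_{j+1}+\cdots+k_n)}$. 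Meanwhile the $[m_j+1]_{q^2}$ factors telescope to $\sqrt{[k_j]_{q^2}!}$, and each step contributes one factor of $\sqrt{1-q^2}$, for a total of $(1-q^2)^{k_j/2}$. Combining with the inductive hypothesis and using $\sum_{j\le a<b\le n} k_a k_b = k_j(k_{j+1}+\cdots+k_n) + \sum_{j+1\le a<b\le n} k_a k_b$ closes the induction.

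The only step requiring care is the bookkeeping of the $q$-exponents: one must verify that the accumulated powers of $q$ form precisely the sum $\sum_{i<j} k_i k_j$ as in \eqref{w_q}. No real obstacle is expected, since the recursion $\sum_{a<b} k_a k_b = k_1(k_2+\cdots+k_n) + \sum_{2\le a<b} k_a k_b$ exactly matches the scalar produced at each stage of the induction. The base case $j=n+1$ is vacuous (both sides equal $e_0$), and the factor $q^{\sum_{i>n}k_i}=1$ at the first stage correctly gives no contribution.
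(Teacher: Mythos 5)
Your proof is correct and takes essentially the same approach as the paper: both compute $\pi(x^k)e_0$ by applying the Pusz--Woronowicz formula one generator at a time and tracking the resulting scalars. The only difference is cosmetic — the paper does a single induction on $|k|$, peeling off one factor $x_m$ at a time (with $m$ the least index with $k_m\ne 0$), whereas you organize the same computation as a downward induction on the block index $j$, applying $x_j^{k_j}$ all at once; the per-step scalars and the $q$-exponent bookkeeping agree in both.
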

\begin{proof}
We use induction on $|k|$. For $|k|=0$ there is nothing to prove.
Suppose now that $|k|>0$, and let $m=\min\{ i=1,\ldots ,n : k_i\ne 0\}$.
We have
\[
x^k=x_m  x^\ell, \quad\text{where}\quad\ell=(0,\ldots ,0,k_m-1,k_{m+1},\ldots ,k_n).
\]
Using the induction hypothesis, we obtain
\[
\begin{split}
\pi(x^k)e_0
&= \sqrt{[\ell]_{q^2}!}\, (1-q^2)^{\tfrac{|k|-1}{2}}
q^{\sum_{m+1\le i<j} k_i k_j+(k_m-1)\sum_{j>m} k_j}\,
\pi(x_m) e_\ell\\
&=\sqrt{[\ell]_{q^2}!}\, (1-q^2)^{\tfrac{|k|-1}{2}}
q^{\sum_{m+1\le i<j} k_i k_j+(k_m-1)\sum_{j>m} k_j}
\sqrt{1-q^2} \sqrt{[k_m]_{q^2}}\, q^{\sum_{j>m}k_j}\, e_k\\
&=\sqrt{[k]_{q^2}!}\, (1-q^2)^{\tfrac{|k|}{2}} q^{\sum_{i<j} k_i k_j}\, e_k. \qedhere
\end{split}
\]
\end{proof}

It will be convenient to introduce one more family of norms on $\cO_q^\reg(\CC^n)$.
Namely, for each $\rho>0$ we let
\begin{equation*}
\| a\|_{\DD,\rho}^{(2)}=\Bigl(\sum_{k\in\Z_+^n} |c_k|^2 w_q^2(k) \rho^{2|k|}\Bigr)^{1/2}
\qquad \Bigl( a=\sum_{k\in\Z_+^n} c_k x^k \in \cO_q^\reg(\CC^n) \Bigr).
\end{equation*}

\begin{lemma}
\label{lemma:l1-l2_equiv}
For each $0<\rho<\tau<+\infty$ we have
\[
\| \cdot \|_{\DD,\rho}^{(2)} \le \| \cdot \|_{\DD,\rho}
\le \left(\frac{\tau^2}{\tau^2-\rho^2}\right)^{n/2} \!\!\| \cdot \|_{\DD,\tau}^{(2)}.
\]
\end{lemma}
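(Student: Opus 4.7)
The plan is to deduce this lemma as a direct specialization of Lemma \ref{lemma:power_spaces}, which was proved in full generality for vector-valued power series spaces precisely so that concrete estimates like this one follow without extra work. The key observation is that the norms $\|\cdot\|_{\DD,\rho}$ and $\|\cdot\|_{\DD,\rho}^{(2)}$ on $\cO_q^\reg(\CC^n)$ are instances of the abstract norms $\|\cdot\|_\rho^{(1)}$ and $\|\cdot\|_\rho^{(2)}$ from Lemma \ref{lemma:power_spaces}, once the weights $w_q(k)$ are absorbed into the norms of the coefficient spaces.

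Concretely, I would introduce the family $\EE=\{E_k:k\in\Z_+^n\}$ of Banach spaces with $E_k=\CC$ equipped with the weighted norm
\[
\|c\|_{E_k}=|c|\,w_q(k)\qquad (c\in\CC).
\]
Under the canonical identification that sends $a=\sum_{k\in\Z_+^n}c_k x^k\in\cO_q^\reg(\CC^n)$ to the family $(c_k)_{k\in\Z_+^n}\in\prod_k E_k$, one has
\[
\|a\|_{\DD,\rho}=\sum_{k\in\Z_+^n}\|c_k\|_{E_k}\rho^{|k|}=\|a\|_\rho^{(1)},\qquad
\|a\|_{\DD,\rho}^{(2)}=\Bigl(\sum_{k\in\Z_+^n}\|c_k\|_{E_k}^2\rho^{2|k|}\Bigr)^{1/2}=\|a\|_\rho^{(2)},
\]
so that these are exactly the norms used in the definition of $\Lambda_r^1(\EE)$ and $\Lambda_r^2(\EE)$.

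With this identification in place, I would apply Lemma \ref{lemma:power_spaces} with $p=1$ and $q=2$. The parameter $\ell$ appearing in the estimate \eqref{power_norms_estim} becomes $\ell=(1/p-1/q)^{-1}=(1-1/2)^{-1}=2$, and \eqref{power_norms_estim} specializes to
\[
\|\cdot\|_\rho^{(2)}\le\|\cdot\|_\rho^{(1)}\le\left(\frac{\tau^2}{\tau^2-\rho^2}\right)^{n/2}\|\cdot\|_\tau^{(2)}
\]
on $\Lambda_r^1(\EE)$, which, after translating back via the identification above, is precisely the inequality claimed. Since the deduction is purely mechanical, there is no real obstacle here; the only substantive point is the cosmetic one of recognizing the weights $w_q(k)\rho^{|k|}$ in the definition of $\|\cdot\|_{\DD,\rho}$ and $\|\cdot\|_{\DD,\rho}^{(2)}$ as an instance of the coefficient-norm-times-$\rho^{|k|}$ structure in Lemma \ref{lemma:power_spaces}.
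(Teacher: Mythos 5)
Your proof is correct and coincides with the paper's own, which simply remarks that the lemma ``is a special case of Lemma~\ref{lemma:power_spaces}''; you have supplied exactly the right identification (taking $E_k=\CC$ with the weighted norm $|c|\,w_q(k)$, then applying \eqref{power_norms_estim} with $p=1$, $q=2$, $\ell=2$).
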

\begin{proof}
The first inequality is obvious (the $\ell^2$-norm is less than or equal to the $\ell^1$-norm).
The second one is a simple application
of the Cauchy-Bunyakowsky-Schwarz inequality:
\[
\begin{split}
\| a\|_{\DD,\rho}
&=\sum_{k\in\Z_+^n} |c_k| w_q(k) \tau^{|k|} (\rho/\tau)^{|k|}\\
&\le \Bigl(\sum_{k\in\Z_+^n} |c_k|^2 w_q^2(k) \tau^{2|k|}\Bigr)^{1/2}
\Bigl(\sum_{k\in\Z_+^n} (\rho/\tau)^{2|k|}\Bigr)^{1/2}
=\left(\frac{\tau^2}{\tau^2-\rho^2}\right)^{n/2} \!\!\| a \|_{\DD,\tau}^{(2)}. \qedhere
\end{split}
\]
\end{proof}

\begin{lemma}
\label{lemma:op-l1_equiv}
For each $\rho>0$, we have
\begin{equation}
\label{op-l1_equiv}
\| \cdot \|_{\BB,\rho}^\infty \le \| \cdot\|_{\DD,\rho}\, , \quad
\| \cdot \|_{\BB,\rho}^\infty\ge (q^2;q^2)_\infty^{n/2} \| \cdot \|_{\DD,\rho}^{(2)}.
\end{equation}
\end{lemma}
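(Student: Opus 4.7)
The plan is to establish both inequalities by evaluating $\pi(\gamma_\rho(a))$ on orthonormal basis vectors of $H$ and exploiting the fact that $\pi(x^k)$ sends each $e_\ell$ to a scalar multiple of $e_{\ell+k}$.

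For the first inequality, I would first invoke the triangle inequality for the operator norm and reduce to proving $\|\pi(x^k)\| \le w_q(k)$ for every $k \in \Z_+^n$: for $a = \sum_k c_k x^k \in \cO_q^\reg(\CC^n)$ this would give
\[
\|a\|_{\BB,\rho}^\infty = \|\pi(\gamma_\rho(a))\| \le \sum_k |c_k| \rho^{|k|} \|\pi(x^k)\| \le \sum_k |c_k| w_q(k) \rho^{|k|} = \|a\|_{\DD,\rho}.
\]
To bound $\|\pi(x^k)\|$, I would apply the Pusz--Woronowicz formula iteratively in the order $\pi(x_n)^{k_n},\pi(x_{n-1})^{k_{n-1}},\ldots,\pi(x_1)^{k_1}$; when $\pi(x_j)$ acts, the coordinates with index $i > j$ have already been augmented by $k_i$. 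A direct computation, using $(1-q^2)[m]_{q^2} = 1 - q^{2m}$ to absorb the $\sqrt{1-q^2}$ prefactors, yields $\pi(x^k) e_\ell = c_\ell\, e_{\ell+k}$ with
\[
|c_\ell| = w_q(k) \cdot q^{\sum_{j<i} k_j \ell_i} \cdot \prod_{j=1}^n \prod_{s=1}^{k_j} \sqrt{1 - q^{2(\ell_j+s)}}.
\]
All factors after $w_q(k)$ lie in $(0,1]$, so $|c_\ell| \le w_q(k)$; since $\ell \mapsto \ell + k$ is injective on $\Z_+^n$ the images $\pi(x^k) e_\ell$ are pairwise orthogonal, whence $\|\pi(x^k)\| = \sup_\ell |c_\ell| \le w_q(k)$.

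For the second inequality, I would test against the vacuum vector, using $\|\pi(\gamma_\rho(a))\| \ge \|\pi(\gamma_\rho(a)) e_0\|$. Applying Lemma~\ref{lemma:vacuum} term-by-term,
\[
\pi(\gamma_\rho(a)) e_0 = \sum_{k \in \Z_+^n} c_k \rho^{|k|} \sqrt{[k]_{q^2}!}\, (1-q^2)^{|k|/2} w_q(k)\, e_k,
\]
and orthonormality of $\{e_k\}$ yields
\[
\|\pi(\gamma_\rho(a)) e_0\|^2 = \sum_k |c_k|^2 w_q^2(k)\, [k]_{q^2}!\, (1-q^2)^{|k|}\, \rho^{2|k|}.
\]
The identity $[m]_{q^2}!\,(1-q^2)^m = (q^2;q^2)_m$ factorizes the weight as $[k]_{q^2}!\,(1-q^2)^{|k|} = \prod_{j=1}^n (q^2;q^2)_{k_j}$. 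Since each factor $(1-q^{2s})$ lies in $(0,1)$, truncating the infinite product only enlarges it, so $(q^2;q^2)_{k_j} \ge (q^2;q^2)_\infty$ and hence $[k]_{q^2}!\,(1-q^2)^{|k|} \ge (q^2;q^2)_\infty^n$; substituting gives the claimed lower bound on $\|\pi(\gamma_\rho(a))e_0\|^2$, and taking square roots completes the argument.

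The principal obstacle is the combinatorial bookkeeping in the first step: correctly tracking the cumulative effect of the $q^{\sum_{i>j} k_i}$ factors as each $\pi(x_j)^{k_j}$ acts on a vector whose coordinates have already been shifted upward by $k_i$ for $i > j$, and verifying that the total $q$-exponent splits cleanly as $w_q(k)$ (independent of $\ell$) times $q^{\sum_{j<i} k_j \ell_i}$ (which is $\le 1$). Once this explicit form of $c_\ell$ is in hand, both inequalities reduce to routine term-by-term estimates against the orthogonal expansion.
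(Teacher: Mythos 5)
Your proof is correct, and the second inequality is established exactly as in the paper: evaluate against the vacuum vector $e_0$, expand via Lemma~\ref{lemma:vacuum}, use orthonormality of $\{e_k\}$, and bound $[k]_{q^2}!\,(1-q^2)^{|k|}=\prod_j(q^2;q^2)_{k_j}\ge(q^2;q^2)_\infty^n$. For the first inequality, however, you take a genuinely different route. The paper simply cites $\|x_i\|_\op\le 1$ from Pusz--Woronowicz and then invokes the \emph{maximality property} of $\|\cdot\|_{\DD,1}$ from \cite[Lemma~5.10]{Pir_qfree} (namely that $\|\cdot\|_{\DD,1}$ is the largest submultiplicative norm on $\cO_q^\reg(\CC^n)$ with $\|x_i\|\le 1$), reducing the general $\rho$ to $\rho=1$ via $\gamma_\rho$. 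You instead compute $\pi(x^k)$ explicitly as a weighted shift $e_\ell\mapsto c_\ell\,e_{\ell+k}$, track the $q$-exponents through the iterated application of the Pusz--Woronowicz formula, verify that $|c_\ell|=w_q(k)\cdot q^{\sum_{j<i}k_j\ell_i}\cdot\prod_{j,s}\sqrt{1-q^{2(\ell_j+s)}}\le w_q(k)$, and deduce $\|\pi(x^k)\|=\sup_\ell|c_\ell|\le w_q(k)$ from the pairwise orthogonality of the image vectors. Your computation of the $q$-exponent is right: $\sum_j k_j\sum_{i>j}(\ell_i+k_i)=\sum_{i<j}k_ik_j+\sum_{j<i}k_j\ell_i$, which is where $w_q(k)$ splits off. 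Your version is self-contained and does not depend on the external lemma, at the cost of the combinatorial bookkeeping you flag; the paper's is shorter but leans on the abstract characterization of $\|\cdot\|_{\DD,1}$. Both are valid, and either would serve.
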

\begin{proof}
Let us first consider the case $\rho=1$, so that $\|\cdot\|_{\BB,1}^\infty=\|\cdot\|_\op$.
By \cite{PW}, we have\footnote{In fact, it is easy to show that
$\| x_i\|_\op = 1$ for all $i=1,\ldots ,n$, but we will not use this equality here.}
$\| x_i\|_\op \le 1$ for all $i=1,\ldots ,n$. Using the maximality property of $\|\cdot\|_{\DD,1}$
(see~\cite[Lemma 5.10]{Pir_qfree}), we conclude that $\|\cdot\|_\op\le \|\cdot\|_{\DD,1}$.

Now take any $a=\sum_k c_k x^k\in\cO_q^\reg(\CC^n)$. Using Lemma~\ref{lemma:vacuum},
we see that
\begin{equation}
\label{opnorm_est}
\| a\|_{\op}^2 \ge \| \pi(a)e_0\|^2
=\sum_{k\in\Z_+^n} |c_k|^2 [k]_{q^2}!\, (1-q^2)^{|k|} w_q^2(k).
\end{equation}
Observe that for each $\ell\in\N$
\[
[\ell]_{q^2}!\, (1-q^2)^\ell=\prod_{j=1}^\ell (1-q^{2j}) \ge \prod_{j=1}^\infty (1-q^{2j})
=(q^2;q^2)_\infty,
\]
and so for each $k=(k_1,\ldots ,k_n)\in\Z_+^n$
\[
[k]_{q^2}!\,(1-q^2)^{|k|}=\prod_{i=1}^n [k_i]_{q^2}!\, (1-q^2)^{k_i} \ge (q^2;q^2)_\infty^n.
\]
Now it follows from \eqref{opnorm_est} that
\[
\| a\|_\op^2 \ge (q^2;q^2)_\infty^n \sum_{k\in\Z_+^n} |c_k|^2 w_q^2(k)
= (q^2;q^2)_\infty^n (\| a\|_{\DD,1}^{(2)})^2.
\]
Thus we have proved \eqref{op-l1_equiv} for $\rho=1$. The general case reduces to
the case $\rho=1$ by using~\eqref{norms_qball_op} and by observing that
\[
\| a\|_{\DD,\rho}=\|\gamma_\rho(a)\|_{\DD,1}\, ,\quad
\| a\|_{\DD,\rho}^{(2)}=\|\gamma_\rho(a)\|_{\DD,1}^{(2)}. \qedhere
\]
\end{proof}

\begin{proof}[Proof of Theorem~{\upshape\ref{thm:poly_vs_ball}}]
Applying Lemmas~\ref{lemma:l1-l2_equiv} and \ref{lemma:op-l1_equiv}, we see that
the families
\[
\{ \|\cdot\|_{\DD,\rho} : \rho\in (0,r)\},
\quad \{ \|\cdot\|_{\DD,\rho}^{(2)} : \rho\in (0,r)\},
\quad \{ \|\cdot\|_{\BB,\rho}^\infty : \rho\in (0,r)\}
\]
of norms on $\cO_q^\reg(\CC^n)$ are equivalent. The rest follows from
Theorem~\ref{thm:poly_vs_ball_2}.
\end{proof}

\begin{remark}
In the special case where $r=\infty$, Theorem~\ref{thm:poly_vs_ball} is equivalent
to~\cite[Theorem 5.16]{Pir_qfree}.
\end{remark}

\begin{remark}
Lemmas~\ref{lemma:l1-l2_equiv} and \ref{lemma:op-l1_equiv}
imply that for each $0<\rho<\tau<+\infty$ we have
\begin{equation}
\label{key}
\Bigl(\frac{\tau^2-\rho^2}{\tau^2}\, (q^2;q^2)_\infty\Bigr)^{n/2} \| \cdot\|_{\DD,\rho}
\le \| \cdot\|_{\BB,\tau}^\infty \le \| \cdot\|_{\DD,\tau}.
\end{equation}
While the second inequality in \eqref{key} holds in the classical case $q=1$ as well,
the first inequality in \eqref{key} becomes useless
(since $(q^2;q^2)_\infty\to 0$ as $q\to1$). Geometrically, this can be explained as follows.
If we fix $\tau$ and take $\rho<\tau$ close enough to $\tau$, then the polydisk of
radius $\rho$ will not be contained in the ball of radius $\tau$, and so the supremum over the
polydisk (which is less than or equal to $\|\cdot\|_{\DD,\rho}$)
will not be dominated by the supremum $\|\cdot\|_{\BB,\tau}^\infty$ over the ball.
\end{remark}

\section{A $q$-analog of Poincar\'e's theorem}
\label{sect:Poincare}
A classical result of H.~Poincar\'e \cite{Poincare} (see also \cite[Theorem 2.7]{Range}) asserts that
the polydisk $\DD_r^n$ and the ball $\BB_r^n$ are not biholomorphically equivalent
(unless $n=1$ or $r=\infty$). By O.~Forster's theorem~\cite{For} (see also \cite[V.7]{GR_Stein}),
the category of Stein spaces is anti-equivalent to the category of Stein
algebras (i.e., Fr\'echet algebras of the form $\cO(X)$, where $X$ is a Stein space)
via the functor $X\mapsto\cO(X)$.
Therefore, when translated into the dual language, Poincar\'e's theorem states
that the Fr\'echet algebras $\cO(\DD_r^n)$ and $\cO(\BB_r^n)$ are not topologically isomorphic.

A natural question is whether or not Poincar\'e's theorem has a $q$-analog, i.e., whether or not
the Fr\'echet algebras $\cO_q(\DD^n_r)$ and $\cO_q(\BB^n_r)$ are topologically
isomorphic. The goal of this section is to answer the above question.

\begin{lemma}
\label{lemma:binom_est}
For each $q\in (0,1)$ and each $k\in\Z_+^n$, we have
\[
(q;q)_\infty^n\le\frac{[k]_q!}{\bigl[|k|\bigr]_q!}\le 1.
\]
\end{lemma}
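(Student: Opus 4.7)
The plan is to express both bounds in the $q$-Pochhammer symbol $(q;q)_m=\prod_{j=1}^m(1-q^j)$, which is the natural device given the right-hand side of the claimed inequality. Using the identity $[m]_q!=(q;q)_m/(1-q)^m$, the factors of $(1-q)^{|k|}$ cancel, and the quantity to be estimated rewrites as
\[
\frac{[k]_q!}{\bigl[|k|\bigr]_q!}=\frac{(q;q)_{k_1}\cdots (q;q)_{k_n}}{(q;q)_{|k|}}.
\]
So the whole task reduces to bounding this ratio above by $1$ and below by $(q;q)_\infty^n$.

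For the upper bound I would proceed by induction on $n$, or equivalently by peeling off one variable at a time. The key observation is the telescoping identity
\[
(q;q)_{k_1+\cdots +k_n}=(q;q)_{k_1+\cdots+k_{n-1}}\prod_{j=1}^{k_n}\bigl(1-q^{k_1+\cdots+k_{n-1}+j}\bigr),
\]
together with the elementary fact that $1-q^{m+j}\ge 1-q^j$ for $q\in(0,1)$ and $m\ge 0$. Comparing factor by factor with $(q;q)_{k_n}=\prod_{j=1}^{k_n}(1-q^j)$ gives $(q;q)_{k_n}\le \prod_{j=1}^{k_n}(1-q^{k_1+\cdots+k_{n-1}+j})$, and iterating yields $(q;q)_{k_1}\cdots(q;q)_{k_n}\le (q;q)_{|k|}$, i.e.\ the ratio is $\le 1$.

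For the lower bound I would use two monotonicity facts. First, since every factor $1-q^j$ lies in $(0,1)$, the sequence $(q;q)_m$ is decreasing in $m$ and converges to $(q;q)_\infty$; hence $(q;q)_{k_i}\ge (q;q)_\infty$ for each $i$, so
\[
(q;q)_{k_1}\cdots (q;q)_{k_n}\ge (q;q)_\infty^n.
\]
Second, $(q;q)_{|k|}\le 1$ (with equality only if $|k|=0$), so dividing by $(q;q)_{|k|}$ can only increase the quantity. Combining these gives the desired lower bound $(q;q)_\infty^n$.

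No step is really the main obstacle here; the entire argument is elementary once one translates $[m]_q!$ into the $q$-Pochhammer form. The only place that requires a moment's thought is the telescoping identity in the upper bound, and even there the inequality $1-q^{m+j}\ge 1-q^j$ does all the work.
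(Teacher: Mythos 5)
Your proof is correct and is essentially the same argument as the paper's: you cancel the powers of $(1-q)$ to reduce to bounding $(q;q)_{k_1}\cdots(q;q)_{k_n}/(q;q)_{|k|}$, match denominator factors against numerator factors via $1-q^j\le 1-q^{m+j}$ for the upper bound, and use $(q;q)_{|k|}\le 1$ together with $(q;q)_m\ge(q;q)_\infty$ for the lower bound. The paper simply writes the same comparisons out as a direct product rather than in Pochhammer notation with an explicit induction.
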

\begin{proof}
Let $k=(k_1,\ldots ,k_n)\in\Z_+^n$. Then
\[
\frac{[k]_q!}{\bigl[|k|\bigr]_q!}
=\frac{\prod_{i=1}^n [k_i]_q!}{[k_1+\cdots +k_n]_q!}
=\frac{\prod_{i=1}^n \prod_{j=1}^{k_i} (1-q^j)}{\prod_{p=1}^{k_1+\cdots +k_n} (1-q^p)}
=\prod_{i=1}^n \prod_{j=1}^{k_i}\frac{1-q^j}{1-q^{k_1+\cdots +k_{i-1}+j}}\le 1.
\]
On the other hand,
\[
\frac{[k]_q!}{\bigl[|k|\bigr]_q!} \ge \prod_{i=1}^n \prod_{j=1}^{k_i}(1-q^j)
\ge \prod_{i=1}^n \prod_{j=1}^\infty(1-q^j)=(q;q)_\infty^n. \qedhere
\]
\end{proof}

\begin{theorem}
\label{thm:poly_vs_ball_2}
Let $q\in \CC^\times,\; |q|\ne 1$, and let $r\in (0,+\infty]$. Then
$\cO_q(\DD^n_r)=\cO_q(\BB^n_r)$
as vector subspaces of $\CC[[x_1,\ldots ,x_n]]$ and as Fr\'echet algebras.
Moreover, for each $\rho\in (0,r)$ we have
\begin{align}
\label{DBD>1}
(|q|^{-2};|q|^{-2})_\infty^{n/2} \|\cdot\|_{\DD,\rho}
&\le\|\cdot\|_{\BB,\rho}\le\|\cdot\|_{\DD,\rho}\qquad (|q|>1);\\
\label{DBD<1}
(|q|^2;|q|^2)_\infty^{n/2} \|\cdot\|_{\DD,\rho}
&\le\|\cdot\|_{\BB,\rho}\le\|\cdot\|_{\DD,\rho}\qquad (|q|<1).
\end{align}
\end{theorem}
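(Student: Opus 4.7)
The plan is a direct term-by-term comparison of the two families of norms on monomials, relying on two ingredients already established: Corollary \ref{cor:qball_norms_2}, which rewrites $\|\cdot\|_{\BB,\rho}$ in terms of $[k]_{|q|^{-2}}!/\bigl[|k|\bigr]_{|q|^{-2}}!$ with no auxiliary $u_q(k)$ factor, and Lemma \ref{lemma:binom_est}, which sandwiches the $q$-multinomial coefficient between $1$ and a positive constant depending only on $q$ and $n$. Once the norms are shown to be equivalent on $\cO_q^\reg(\CC^n)$, the completions are the same Fr\'echet space; since the multiplication in both is fixed by $x_i x_j=q x_j x_i$, they are the same Fr\'echet algebra.

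For the case $|q|>1$, I would use $w_q(k)=1$, so $\|f\|_{\DD,\rho}=\sum_k|c_k|\rho^{|k|}$. By Corollary \ref{cor:qball_norms_2},
\[
\|f\|_{\BB,\rho}=\sum_{k\in\Z_+^n}|c_k|\left(\frac{[k]_{|q|^{-2}}!}{\bigl[|k|\bigr]_{|q|^{-2}}!}\right)^{1/2}\rho^{|k|}.
\]
Since $|q|^{-2}\in (0,1)$, Lemma \ref{lemma:binom_est} applied with $q$ replaced by $|q|^{-2}$ gives $(|q|^{-2};|q|^{-2})_\infty^n\le [k]_{|q|^{-2}}!/\bigl[|k|\bigr]_{|q|^{-2}}!\le 1$. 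Taking square roots and summing yields \eqref{DBD>1} termwise.

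For the case $|q|<1$, I would instead use the original definition of $\|\cdot\|_{\BB,\rho}$ from Theorem \ref{thm:qball}, together with the observation that $w_q(k)=|q|^{\sum_{i<j}k_ik_j}=u_q(k)$. Then
\[
\|f\|_{\BB,\rho}=\sum_{k\in\Z_+^n}|c_k|\left(\frac{[k]_{|q|^2}!}{\bigl[|k|\bigr]_{|q|^2}!}\right)^{1/2}w_q(k)\rho^{|k|},
\]
so the $u_q(k)=w_q(k)$ factor matches the corresponding factor in $\|\cdot\|_{\DD,\rho}$ exactly. Now $|q|^2\in(0,1)$, and Lemma \ref{lemma:binom_est} with $q$ replaced by $|q|^2$ gives $(|q|^2;|q|^2)_\infty^n\le [k]_{|q|^2}!/\bigl[|k|\bigr]_{|q|^2}!\le 1$; again taking square roots and summing yields \eqref{DBD<1}.

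From the two-sided estimates \eqref{DBD>1} and \eqref{DBD<1}, the families of norms $\{\|\cdot\|_{\DD,\rho}:\rho\in(0,r)\}$ and $\{\|\cdot\|_{\BB,\rho}:\rho\in(0,r)\}$ are equivalent on $\cO_q^\reg(\CC^n)$. Hence their completions coincide as Fr\'echet spaces, and can be identified as the same space of formal power series in $\CC[[x_1,\ldots,x_n]]$; since both carry the multiplication uniquely determined by $x_ix_j=qx_jx_i$, they coincide as Fr\'echet algebras. There is no real obstacle here — the proof is essentially bookkeeping, with the only conceptual point being the choice to express $\|\cdot\|_{\BB,\rho}$ via Corollary \ref{cor:qball_norms_2} in the regime $|q|>1$ so that the $u_q(k)$ weight (which is $\le 1$ only when $|q|\le 1$) is absorbed into the $q$-binomial factor.
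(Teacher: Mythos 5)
Your proof is correct. The $|q|>1$ case is handled exactly as in the paper: rewrite $\|\cdot\|_{\BB,\rho}$ via Corollary \ref{cor:qball_norms_2} (absorbing the $u_q(k)\ge 1$ factor into the $q$-multinomial coefficient), note $w_q(k)=1$, and apply Lemma \ref{lemma:binom_est} with $q$ replaced by $|q|^{-2}$. Where you genuinely diverge is the $|q|<1$ case: the paper disposes of it in one line by invoking the norm-preserving isomorphism $\tau$ of Proposition \ref{prop:q-qinv} (via \eqref{q-qinv_norms}) to reduce to the $|q|>1$ computation, whereas you observe that $u_q(k)=w_q(k)=|q|^{\sum_{i<j}k_ik_j}$ when $|q|<1$ and compare the two norms directly from the definition in Theorem \ref{thm:qball}, again via Lemma \ref{lemma:binom_est} (now with $q$ replaced by $|q|^2$). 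Both routes are sound. The paper's reduction is slicker once Proposition \ref{prop:q-qinv} is in hand, since it avoids redoing the computation; your direct argument is marginally more elementary and self-contained, requiring only the $u_q=w_q$ identity rather than the full $q\leftrightarrow q^{-1}$ symmetry. The closing step — equivalence of norm families on $\cO_q^\reg(\CC^n)$ implies identical completions as Fr\'echet algebras — is exactly what the paper relies on implicitly when it says it suffices to verify the estimates on $\cO_q^\reg(\CC^n)$.
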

\begin{proof}
To prove the result, it suffices to show that \eqref{DBD>1}
and \eqref{DBD<1} hold on $\cO_q^\reg(\CC^n)$.
Suppose first that $|q|>1$. Applying Corollary~\ref{cor:qball_norms_2}
and Lemma~\ref{lemma:binom_est},
for each $f=\sum_k c_k x^k\in\cO_q^\reg(\CC^n)$ we obtain
\[
\begin{split}
\| f\|_{\BB,\rho}
&=\sum_k |c_k| \left(\frac{[k]_{|q|^{-2}}!}{\bigl[ |k|\bigr]_{|q|^{-2}}!}\right)^{1/2}\!\!\!\!
\rho^{|k|}
\le \sum_k |c_k| \rho^{|k|} = \| f\|_{\DD,\rho};\\
\| f\|_{\BB,\rho}
&=\sum_k |c_k| \left(\frac{[k]_{|q|^{-2}}!}{\bigl[ |k|\bigr]_{|q|^{-2}}!}\right)^{1/2}\!\!\!\!
\rho^{|k|}\\
&\ge (|q|^{-2};|q|^{-2})_\infty^{n/2} \sum_k |c_k| \rho^{|k|}
= (|q|^{-2};|q|^{-2})_\infty^{n/2}\| f\|_{\DD,\rho}.
\end{split}
\]
This completes the proof in the case where $|q|>1$. The case $|q|<1$
is reduced to the previous one by applying \eqref{q-qinv_norms}.
\end{proof}

\begin{corollary}
\label{cor:B_inf=C^n}
For each $q\in\CC^\times$, we have $\cO_q(\BB^n_\infty)=\cO_q(\CC^n)$
as vector subspaces of $\CC[[x_1,\ldots ,x]]$ and as Fr\'echet algebras.
\end{corollary}
\begin{proof}
If $|q|\ne 1$, then the result follows from Theorem~\ref{thm:poly_vs_ball_2}.
Suppose now that $|q|=1$.
By looking at \eqref{O_q} and \eqref{poly_power_rep}, we see that
the map from $\cO(\CC^n)$ to $\cO_q(\CC^n)$ that takes each
$f\in\cO(\CC^n)$ to its Taylor series at $0$ is a Fr\'echet space isomorphism.
On the other hand,
Proposition~\ref{prop:ball_pow_ser} yields a similar topological isomorphism
between the underlying Fr\'echet spaces of $\cO(\CC^n)$ and $\cO_q(\BB^n_\infty)$.
By composing these isomorphisms, we obtain a Fr\'echet space isomorphism between
$\cO_q(\CC^n)$ and $\cO_q(\BB^n_\infty)$ taking $x^k$ to $x^k$ ($k\in\Z_+^n$).
Clearly, this is an algebra isomorphism.
\end{proof}

\begin{remark}
\label{rem:q_poly_ball_lcs}
We have already mentioned that, for each $q\in\CC^\times$, the monomials
$x^k\; (k\in\Z_+^n)$ form a basis of $\cO_q^\reg(\CC^n)$. Hence
we have a vector space isomorphism $\cO^\reg(\CC^n)\to\cO_q^\reg(\CC^n)$
given by $x^k\mapsto x^k$. It is natural to ask whether a similar result holds for
$\cO_q(\DD^n_r)$ and $\cO_q(\BB^n_r)$, i.e., whether there exist Fr\'echet space
isomorphisms
\[
\varphi_1\colon \cO(\DD^n_r)\to\cO_q(\DD^n_r), \quad
\varphi_2\colon \cO(\BB^n_r)\to\cO_q(\BB^n_r),
\]
that take each holomorphic function $f$ to its Taylor series at $0$.
If $|q|=1$, then, comparing~\eqref{poly_power_rep} with Definition~\ref{def:q_poly}
and~\eqref{ball_pow_rep} with Definition~\ref{def:qball}, respectively, we see that
both $\varphi_1$ and $\varphi_2$ are Fr\'echet space isomorphisms.
The same argument shows that $\varphi_1$ is a Fr\'echet space isomorphism whenever
$|q|\ge 1$. If $|q|<1$, then $\varphi_1$ is a continuous linear map
(because $w_q(k)\le 1$), but is not an isomorphism by Remark~\ref{rem:no_mult}.
For the same reason, if $|q|<1$, then $\varphi_2$ is a continuous linear map,
but is not an isomorphism. Finally, if $|q|>1$, then $\varphi_2$ is not well defined (unless $r=\infty$).
Indeed, if $\varphi_2$ existed, then we would have a chain of linear maps
\[
\cO(\BB^n_r) \xra{\varphi_2} \cO_q(\BB^n_r)=\cO_q(\DD^n_r) \xra{\varphi_1^{-1}}
\cO(\DD^n_r),
\]
and the composite map $\varphi\colon\cO(\BB^n_r)\to\cO(\DD^n_r)$
would take each $z^k$ to itself. Therefore $\varphi$ would be an inverse for the restriction
map $\cO(\DD^n_r)\to\cO(\BB^n_r)$, which is a contradiction.
\end{remark}

Now let us turn to a more difficult and more interesting case $|q|=1$.
To prove our $q$-version of Poincar\'e's theorem, we will need to extend the notion of
joint spectral radius in Banach algebras (see \cite[V.35 and Comments to Chapter V]{Muller_book})
to the setting of Arens-Michael algebras.

\begin{definition}
Let $A$ be an Arens-Michael algebra, and let $\{ \|\cdot\|_\lambda : \lambda\in\Lambda\}$
be a directed defining family of submultiplicative seminorms on $A$.
Given an $n$-tuple $a=(a_1,\ldots ,a_n)\in A^n$, we define the {\em joint $\ell^p$-spectral radius}
$r^A_p(a)$ by
\begin{equation}
\label{j-sprad}
\begin{split}
r^A_p(a)&=\sup_{\lambda\in\Lambda}%
\lim_{d\to\infty}\Bigl(\sum_{\alpha\in W_{n,d}} \| a_\alpha\|_\lambda^p\Bigr)^{1/pd}
\quad\text{for }1\le p<\infty;\\
r^A_\infty(a)&=\sup_{\lambda\in\Lambda}%
\lim_{d\to\infty}\Bigl(\sup_{\alpha\in W_{n,d}} \| a_\alpha\|_\lambda\Bigr)^{1/d}.
\end{split}
\end{equation}
\end{definition}

\begin{remark}
The joint $\ell^\infty$-spectral radius was studied by A.\,So\l tysiak \cite{Solt2}
in the case where the $a_j$'s commute, but $A$ is not necessarily locally $m$-convex.
\end{remark}

By \cite[C.35.2]{Muller_book}, the limits in \eqref{j-sprad} always exist.
In contrast to the Banach algebra case, it may happen that $r^A_p(a)=+\infty$.
For example, if $A=\cO(\CC)$ and $z\in A$ is the complex coordinate, then an easy
computation shows that $r_p^A(z)=+\infty$ for all $p$
(see also Examples~\ref{ex:j-sprad_qpoly} and~\ref{ex:j-sprad_qball} below).

\begin{prop}
The definition of $r^A_p(a)$ does not depend on the choice of a
directed defining family of submultiplicative seminorms on $A$.
\end{prop}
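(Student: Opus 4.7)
The plan is to deduce the independence from one comparison lemma. For a single submultiplicative seminorm $\|\cdot\|$ on $A$, let us temporarily write
\[
r_p(a,\|\cdot\|)=\lim_{d\to\infty}\Bigl(\sum_{\alpha\in W_{n,d}}\|a_\alpha\|^p\Bigr)^{1/pd}
\]
(and the obvious $\sup$-variant for $p=\infty$). Existence of the limit in $[0,+\infty]$ is the content of \cite[C.35.2]{Muller_book}: the sequence $c_d=\sum_{\alpha\in W_{n,d}}\|a_\alpha\|^p$ satisfies $c_{d+e}\le c_d c_e$ thanks to $\|a_{\alpha\beta}\|\le\|a_\alpha\|\,\|a_\beta\|$ (where $\alpha\beta$ denotes concatenation), and Fekete's subadditivity lemma applied to $\log c_d$ yields convergence of $c_d^{1/d}$.

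Next comes the key monotonicity step: if $\|\cdot\|_1$ and $\|\cdot\|_2$ are submultiplicative seminorms on $A$ with $\|x\|_1\le C\|x\|_2$ for all $x\in A$, then $r_p(a,\|\cdot\|_1)\le r_p(a,\|\cdot\|_2)$. Indeed, applying the inequality termwise to each $a_\alpha$ gives
\[
\Bigl(\sum_{\alpha\in W_{n,d}}\|a_\alpha\|_1^p\Bigr)^{1/pd}\le C^{1/d}\Bigl(\sum_{\alpha\in W_{n,d}}\|a_\alpha\|_2^p\Bigr)^{1/pd},
\]
and $C^{1/d}\to 1$ as $d\to\infty$ absorbs the constant. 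The $p=\infty$ case is handled identically, with sums replaced by suprema.

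From here the proposition is formal. If $\{\|\cdot\|_\lambda\}_{\lambda\in\Lambda}$ and $\{\|\cdot\|_\mu\}_{\mu\in M}$ are two directed defining families of submultiplicative seminorms inducing the Arens--Michael topology of $A$, then equivalence of the corresponding topologies means that each $\|\cdot\|_\lambda$ is majorized by a constant multiple of some $\|\cdot\|_\mu$, and vice versa. The monotonicity then yields $r_p(a,\|\cdot\|_\lambda)\le r_p(a,\|\cdot\|_\mu)\le\sup_{\mu'\in M}r_p(a,\|\cdot\|_{\mu'})$ for a suitable $\mu$; taking the supremum over $\lambda$ and then exchanging the roles of the two families produces equality of the two suprema, which is exactly the claim. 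There is no genuine obstacle here: the only point requiring care is that a true limit (rather than $\limsup$) is asserted in \eqref{j-sprad}, and this is precisely what Fekete's lemma delivers, making the prefactor $C^{1/d}$ harmless in the limit.
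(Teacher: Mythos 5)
Your proof is correct and takes essentially the same route as the paper: a termwise comparison $\|\cdot\|_\mu\le C\|\cdot\|_\lambda$ gives the factor $C^{1/d}\to 1$, which is absorbed in the limit $d\to\infty$, and taking suprema over the two directed families gives equality. The only difference is cosmetic — you package the comparison as an explicit monotonicity lemma and unpack why the limits in \eqref{j-sprad} exist (Fekete), whereas the paper just does the comparison inline and cites \cite[C.35.2]{Muller_book}.
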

\begin{proof}
Let $S=\{ \|\cdot\|_\lambda : \lambda\in\Lambda\}$ and
$S'=\{ \|\cdot\|_\mu : \mu\in\Lambda'\}$ be two
directed defining families of submultiplicative seminorms on $A$, and let
$r^A_p(a;S)$ and $r^A_p(a;S')$ denote the respective joint spectral radii.
Then for each $\mu\in\Lambda'$ there exist $\lambda\in\Lambda$ and $C>0$ such that
$\| \cdot\|_\mu \le C\| \cdot\|_\lambda$ on $A$. Therefore for each $p\in [1,+\infty)$ we obtain
\[
\lim_{d\to\infty}\Bigl(\sum_{\alpha\in W_{n,d}} \| a_\alpha\|_\mu^p\Bigr)^{1/pd}
\le \lim_{d\to\infty} C^{1/d}\Bigl(\sum_{\alpha\in W_{n,d}} \| a_\alpha\|_\lambda^p\Bigr)^{1/pd}
\le r_p^A(a;S),
\]
whence $r_p^A(a;S')\le  r_p^A(a;S)$. For $p=\infty$, the computation is similar.
Since $S$ and $S'$ are equivalent, the result follows.
\end{proof}

Given an algebra $A$, an $n$-tuple $a=(a_1,\ldots ,a_n)\in A^n$, and an algebra homomorphism
$\varphi\colon A\to B$, we denote by $\varphi(a)$ the $n$-tuple
$(\varphi(a_1),\ldots ,\varphi(a_n))\in B^n$.

\begin{prop}
\label{prop:j-sprad_hom}
Let $A$ and $B$ be Arens-Michael algebras, and let $\varphi\colon A\to B$ be a continuous
homomorphism. Then for each $a\in A^n$ we have $r_p^B(\varphi(a))\le r_p^A(a)$.
\end{prop}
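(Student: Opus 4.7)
The plan is to reduce everything to the inequality $\|\varphi(a_\alpha)\|_\mu \le C\|a_\alpha\|_\lambda$ that comes from continuity of $\varphi$, and then pass to the limits in the definition of the joint spectral radius.

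First I would fix a directed defining family $\{\|\cdot\|_\lambda : \lambda \in \Lambda\}$ of submultiplicative seminorms on $A$ and a similar family $\{\|\cdot\|_\mu : \mu \in \Lambda'\}$ on $B$. By the previous proposition, it suffices to compute both joint spectral radii with respect to these chosen families. Given $\mu \in \Lambda'$, continuity of $\varphi$ together with directedness of the family on $A$ yields $\lambda = \lambda(\mu) \in \Lambda$ and $C = C(\mu) > 0$ such that $\|\varphi(x)\|_\mu \le C\|x\|_\lambda$ for all $x \in A$.

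The key observation is that $\varphi$ is a (unital) algebra homomorphism, so $\varphi(a_\alpha) = \varphi(a)_\alpha$ for every $\alpha \in W_n$ (including $\alpha = *$ by unitality). Therefore $\|\varphi(a)_\alpha\|_\mu \le C\|a_\alpha\|_\lambda$ for every $\alpha \in W_{n,d}$. Raising to the $p$-th power, summing over $W_{n,d}$ in the case $1 \le p < \infty$ (or taking the supremum over $W_{n,d}$ in the case $p = \infty$), and then extracting a $pd$-th root (respectively a $d$-th root) gives, for $1 \le p < \infty$,
\[
\Bigl(\sum_{\alpha\in W_{n,d}} \|\varphi(a)_\alpha\|_\mu^p\Bigr)^{1/pd}
\le C^{1/d}\Bigl(\sum_{\alpha\in W_{n,d}} \|a_\alpha\|_\lambda^p\Bigr)^{1/pd},
\]
and the analogous inequality for $p = \infty$. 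Since $C^{1/d} \to 1$ as $d \to \infty$, passing to the limit (the limits exist by \cite[C.35.2]{Muller_book}) and then taking the supremum over $\mu \in \Lambda'$ yields $r_p^B(\varphi(a)) \le r_p^A(a)$.

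There is essentially no obstacle here: the proposition is a routine consequence of the multiplicativity and continuity of $\varphi$. The only small care needed is to remember that submultiplicative seminorms on $B$ pull back to submultiplicative seminorms on $A$ under continuous homomorphisms, which is what makes the comparison work termwise before passing to the limit.
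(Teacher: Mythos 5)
Your proof is correct and follows essentially the same route as the paper: pull back a seminorm on $B$ through $\varphi$ to get a termwise estimate $\|\varphi(a)_\alpha\|_\mu\le C\|a_\alpha\|_\lambda$, use $\varphi(a)_\alpha=\varphi(a_\alpha)$, and pass to the limit after noting $C^{1/d}\to 1$. Nothing to add.
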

\begin{proof}
Let $\{ \|\cdot\|_\lambda : \lambda\in\Lambda\}$ and
$\{ \|\cdot\|_\mu : \mu\in\Lambda'\}$ be directed defining families of submultiplicative
seminorms on $A$ and $B$, respectively.
Then for each $\mu\in\Lambda'$ there exist $\lambda\in\Lambda$ and $C>0$ such that
for each $a\in A$ we have $\| \varphi(a)\|_\mu \le C\| a\|_\lambda$.
Let now $a\in A^n$ and $p\in [1,+\infty)$. We have
\[
\begin{split}
\lim_{d\to\infty}\Bigl(\sum_{\alpha\in W_{n,d}} \| \varphi(a)_\alpha\|_\mu^p\Bigr)^{1/pd}
&=\lim_{d\to\infty}\Bigl(\sum_{\alpha\in W_{n,d}} \| \varphi(a_\alpha)\|_\mu^p\Bigr)^{1/pd}\\
&\le \lim_{d\to\infty} C^{1/d}\Bigl(\sum_{\alpha\in W_{n,d}} \| a_\alpha\|_\lambda^p\Bigr)^{1/pd}
\le r_p^A(a).
\end{split}
\]
For $p=\infty$, the computation is similar. The rest is clear.
\end{proof}

\begin{corollary}
\label{cor:j-sprad_isom}
Let $A$ and $B$ be Arens-Michael algebras, and let $\varphi\colon A\to B$ be a topological
algebra isomorphism. Then for each $a\in A^n$ we have $r_p^B(\varphi(a))=r_p^A(a)$.
\end{corollary}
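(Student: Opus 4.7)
The plan is to derive this directly from Proposition~\ref{prop:j-sprad_hom} by applying it to both $\varphi$ and its inverse. Since $\varphi\colon A\to B$ is a topological algebra isomorphism, the map $\varphi^{-1}\colon B\to A$ is also a continuous algebra homomorphism between Arens-Michael algebras, so Proposition~\ref{prop:j-sprad_hom} applies to it as well.

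More precisely, first I would apply Proposition~\ref{prop:j-sprad_hom} to $\varphi$ and the $n$-tuple $a\in A^n$, obtaining $r_p^B(\varphi(a))\le r_p^A(a)$. Then I would apply the same proposition to $\varphi^{-1}$ and the $n$-tuple $\varphi(a)\in B^n$, obtaining
\[
r_p^A(\varphi^{-1}(\varphi(a)))\le r_p^B(\varphi(a)).
\]
Since $\varphi^{-1}(\varphi(a_i))=a_i$ for each $i$, we have $\varphi^{-1}(\varphi(a))=a$, so the latter inequality reads $r_p^A(a)\le r_p^B(\varphi(a))$. Combining the two inequalities yields the desired equality $r_p^A(a)=r_p^B(\varphi(a))$.

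There is no real obstacle here; the entire content has already been packaged into Proposition~\ref{prop:j-sprad_hom}, and the only thing to observe is that the inverse of a topological algebra isomorphism is again a continuous algebra homomorphism, which is automatic from the definition.
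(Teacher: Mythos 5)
Your proof is correct and is exactly the argument the paper intends: the corollary is stated without proof, and the only sensible derivation is to apply Proposition~\ref{prop:j-sprad_hom} to both $\varphi$ and $\varphi^{-1}$ as you do. Nothing more is needed.
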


\begin{remark}
If $A$ is a commutative Banach algebra, then for each $a\in A^n$ we have
\begin{equation}
\label{geom_sprad}
r_p^A(a)=\sup\{ \| z\|_p : z\in\sigma_A(a)\},
\end{equation}
where $\sigma_A(a)$ is the joint spectrum of $a$ and $\|\cdot\|_p$ is the $\ell^p$-norm
on $\CC^n$ (see \cite{Solt} or \cite[Theorems 35.5 and 35.6]{Muller_book}). This result easily extends to
commutative Arens-Michael algebras. Indeed, let $A$ be a commutative Arens-Michael algebra, and let
$\{ \|\cdot\|_\lambda : \lambda\in\Lambda\}$ be a directed defining family of submultiplicative
seminorms on $A$. For each $\lambda\in\Lambda$, let $A_\lambda$ denote the completion of $A$
with respect to $\|\cdot\|_\lambda$. By definition, we have
$r^A_p(a)=\sup_\lambda r_p^{A_\lambda}(a_\lambda)$, where $a_\lambda$ is the canonical
image of $a$ in $A_\lambda^n$. On the other hand, a standard argument
(cf. \cite[Proposition 5.1.8]{X2}) shows that
$\sigma_A(a)=\bigcup_\lambda \sigma_{A_\lambda}(a_\lambda)$.
Applying now~\eqref{geom_sprad} to each $a_\lambda$ and taking then the supremum over $\lambda$,
we get the result. In the case where $p=\infty$, a more general fact was proved
by A.\,So\l tysiak \cite{Solt2}.
\end{remark}

Let us introduce some notation. Given $\alpha\in W_n$
and $i\in\{1,\ldots ,n\}$, let
\[
\rp_i(\alpha)=|\alpha^{-1}(i)|.
\]
Thus we have a map
\begin{equation}
\label{proj}
\rp\colon W_n\to\Z_+^n,\quad \rp(\alpha)=(\rp_1(\alpha),\ldots ,\rp_n(\alpha)).
\end{equation}
Observe that, for each $k\in\Z_+^n$, we have
\begin{equation}
\label{card_p^{-1}}
|\rp^{-1}(k)|=\frac{|k|!}{k!}.
\end{equation}
Let now $q\in\CC^\times$, and let $x_1,\ldots ,x_n$ be the canonical generators
of $\cO_q^\reg(\CC^n)$. Then for each $\alpha\in W_n$ there exists
a unique $\rt(\alpha)\in\CC^\times$ such that
\begin{equation}
\label{t_alpha}
x_\alpha=\rt(\alpha)x^{\rp(\alpha)}.
\end{equation}
It is easy to give an explicit formula for $\rt(\alpha)$ (see \cite[Lemma 7.8]{Pir_qball_arx}),
but we do not need it here.
Let us only observe that $\rt(\alpha)$ is an integer power of $q$.

The following two examples will be crucial for what follows.

\begin{example}
\label{ex:j-sprad_qpoly}
Let $|q|=1$, and let $x=(x_1,\ldots ,x_n)\in\cO_q(\DD_r^n)^n$.
We claim that
\begin{equation}
\label{j-sprad_qpoly}
r_2^{\cO_q(\DD_r^n)}(x)=r\sqrt{n}.
\end{equation}
Indeed, for each $\rho\in (0,r)$ we have
\[
\begin{split}
\lim_{d\to\infty} \Bigl(\sum_{\alpha\in W_{n,d}} \| x_\alpha\|^2_{\DD,\rho}\Bigr)^{1/2d}
&=\lim_{d\to\infty} \Bigl(\sum_{\alpha\in W_{n,d}} \| x^{\rp(\alpha)}\|^2_{\DD,\rho}\Bigr)^{1/2d}
=\lim_{d\to\infty} \Bigl(\sum_{\alpha\in W_{n,d}} \rho^{2d}\Bigr)^{1/2d}\\
&=\rho\lim_{d\to\infty} |W_{n,d}|^{1/2d}
=\rho\lim_{d\to\infty} (n^d)^{1/2d}=\rho\sqrt{n}.
\end{split}
\]
Taking the supremum over $\rho$ yields \eqref{j-sprad_qpoly}.
\end{example}

\begin{example}
\label{ex:j-sprad_qball}
Let $|q|=1$, and let $x=(x_1,\ldots ,x_n)\in\cO_q(\BB_r^n)^n$.
We claim that
\begin{equation}
\label{j-sprad_qball}
r_2^{\cO_q(\BB_r^n)}(x)=r.
\end{equation}
To see this, observe that for each $d\in\Z_+$ we have
\begin{equation}
\label{binom_estim}
\bigl| (\Z_+^n)_d\bigr|=\binom{d+n-1}{n-1}
\le (d+1)(d+2)\cdots (d+n-1)\le (d+n-1)^{n-1}.
\end{equation}
Hence for each $\rho\in (0,r)$ we obtain
\[
\begin{split}
\lim_{d\to\infty} \Bigl(\sum_{\alpha\in W_{n,d}} \| x_\alpha\|^2_{\BB,\rho}\Bigr)^{1/2d}
&=\lim_{d\to\infty} \Bigl(\sum_{\alpha\in W_{n,d}} \| x^{\rp(\alpha)}\|^2_{\BB,\rho}\Bigr)^{1/2d}\\
&=\lim_{d\to\infty} \Bigl(\sum_{k\in (\Z_+^n)_d}%
\frac{|k|!}{k!} \| x^k\|^2_{\BB,\rho}\Bigr)^{1/2d}\\
&=\lim_{d\to\infty} \Bigl(\sum_{k\in (\Z_+^n)_d}%
\rho^{2d}\Bigr)^{1/2d}
=\rho\lim_{d\to\infty} |(\Z_+^n)_d|^{1/2d}
=\rho.
\end{split}
\]
Taking the supremum over $\rho$ yields \eqref{j-sprad_qball}.
\end{example}

\begin{remark}
We have already noticed (see Remark~\ref{rem:q_poly_ball_lcs}) that, if $|q|=1$, then
$\cO_q(\DD_r^n)=\cO(\DD^n_r)$ and $\cO_q(\BB_r^n)=\cO(\BB^n_r)$ as locally
convex spaces. Also, observe that $\| x_\alpha\|_{\DD,\rho}$ and
$\| x_\alpha\|_{\BB,\rho}$ do not depend on $q$ provided that $|q|=1$.
Hence $r_2^{\cO_q(\DD_r^n)}(x)$ and $r_2^{\cO_q(\BB_r^n)}(x)$ do not depend on $q$.
On the other hand, it is easy to show that $\sigma_{\cO(\DD_r^n)}(x)=\DD_r^n$
and $\sigma_{\cO(\BB_r^n)}(x)=\BB_r^n$. Applying now~\eqref{geom_sprad}, we obtain
\begin{align*}
r_2^{\cO_q(\DD_r^n)}(x)
&=\sup\{ \| z\|_2 : z\in\DD_r^n\}=r\sqrt{n};\\
r_2^{\cO_q(\BB_r^n)}(x)
&=\sup\{ \| z\|_2 : z\in\BB_r^n\}=r,
\end{align*}
which yields an alternative proof of \eqref{j-sprad_qpoly} and \eqref{j-sprad_qball}.
\end{remark}

Although the algebras $\cO_q(\DD_r^n)$ and $\cO_q(\BB_r^n)$ are not graded in the
purely algebraic sense, it will be convenient to introduce the following terminology.
Let $A$ denote either $\cO_q(\DD_r^n)$ or $\cO_q(\BB_r^n)$, and let
\[
A_i=\spn\{ x^k : |k|=i\} \qquad (i\in\Z_+).
\]
Then each $a\in A$ can be uniquely written as $a=\sum_{i=0}^\infty a_i$,
where $a_i\in A_i$ and the series absolutely converges in $A$. The element $a_i$ will
be called the {\em $i$th homogeneous component} of $a$. Explicitly, if $a=\sum_k c_k x^k$,
then $a_i=\sum_{|k|=i} c_k x^k$. We clearly have $A_i A_j\subseteq A_{i+j}$
for all $i,j\in\Z_+$. Observe also that for each $a,b\in A$ and each $\ell\in\Z_+$ we have
\[
(ab)_\ell=\sum_{i+j=\ell} a_i b_j.
\]
Let also
\[
A_{\ge i}=\ol{\bigoplus_{j\ge i} A_j}=\ol{\spn}\{ x^k : |k|\ge i\} \qquad (i\in\Z_+).
\]
Obviously,
\begin{equation}
\label{ge-incl}
A_{\ge i} A_{\ge j}\subseteq A_{\ge (i+j)} \qquad (i,j\in\Z_+).
\end{equation}

Here is our main result.

\begin{theorem}
If $|q|=1$, $n\ge 2$, and $r<\infty$,
then the Fr\'echet algebras $\cO_q(\DD_r^n)$ and $\cO_q(\BB_r^n)$
are not topologically isomorphic.
\end{theorem}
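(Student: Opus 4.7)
The plan is as follows. Assume for contradiction that $\varphi\colon\cO_q(\DD_r^n)\to\cO_q(\BB_r^n)$ is a topological algebra isomorphism and set $z_i=\varphi^{-1}(x_i)\in\cO_q(\DD_r^n)$; these satisfy $z_i z_j=q z_j z_i$ for $i<j$. The augmentation character $\chi_0$ sending all $x_i\mapsto 0$ is distinguished as the unique character of $\cO_q(\DD_r^n)$ at which the $n$ ``branches'' of the Gelfand space meet, hence $\varphi^{-1}$ carries the corresponding character of $\cO_q(\BB_r^n)$ onto it, so $z_i\in A_{\ge 1}^{(\DD)}:=\ker\chi_0$. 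Writing $z_i=w_i^{(1)}+g_i$ with $w_i^{(1)}\in A_1^{(\DD)}=\spn\{x_1,\ldots,x_n\}$ and $g_i\in A_{\ge 2}^{(\DD)}$, a direct degree-$2$ analysis of the $q$-commutation relations (using $q\ne\pm 1$; the cases $q=\pm 1$ are handled by classical Poincar\'e and Proposition~\ref{prop:q-qinv}), together with invertibility of the induced map on the cotangent space $A_{\ge 1}/A_{\ge 2}$, forces $w_i^{(1)}=c'_i x_i$ with $c'_i\in\CC^\times$ (up to a relabeling of the generators).

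Next I would show $|c'_i|=1$. Continuity of $\varphi^{-1}$ yields, for each $\rho<r$, constants $\rho''<r$ and $C'>0$ with $\|\varphi^{-1}(g)\|_{\DD,\rho}\le C'\|g\|_{\BB,\rho''}$. Applied to $g=x_i^d$, this gives $|c'_i|^d\rho^d\le\|z_i^d\|_{\DD,\rho}\le C'\rho''^d$; the lower bound holds because the coefficient of $x_i^d$ in $z_i^d$ is exactly $c'_i{}^d$, the remaining terms lying in $A_{\ge d+1}^{(\DD)}$ and contributing positively to the $\ell^1$-type norm $\|\cdot\|_{\DD,\rho}$. Taking $d$-th roots gives $|c'_i|\le (C')^{1/d}\rho''/\rho\to\rho''/\rho$, and letting $\rho\to r$ (so that $\rho''/\rho<r/\rho\to 1$) forces $|c'_i|\le 1$. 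The symmetric argument for $\varphi$, together with $c'_i c_i=1$ (from matching the degree-$1$ parts of $\varphi\circ\varphi^{-1}=\id$), yields $|c'_i|=1$ for all $i$.

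The contradiction now follows from a sharper use of the same bound. Since $\varphi^{-1}$ is an algebra homomorphism, $z^k=z_1^{k_1}\cdots z_n^{k_n}=\varphi^{-1}(x^k)$, and by the same leading-term analysis its $x^k$-coefficient equals $c'^k:=\prod_i c'_i{}^{k_i}$, of modulus $1$, while all other contributions lie in $A_{\ge|k|+1}^{(\DD)}$; hence $\|z^k\|_{\DD,\rho}\ge\rho^{|k|}$. The continuity bound applied to $g=x^k$ gives $\|z^k\|_{\DD,\rho}\le C'(k!/|k|!)^{1/2}\rho''^{|k|}$, so
\[
\left(\frac{\rho}{\rho''}\right)^{|k|}\le C'\left(\frac{k!}{|k|!}\right)^{1/2}.
\]
Specializing to $k$ nearly uniform with $k_i\approx d/n$, $|k|=d$, Stirling's formula (in the spirit of Lemma~\ref{lemma:Stirling}) gives $(k!/d!)^{1/2}\le C_1 n^{-d/2}\cdot\mathrm{poly}(d)$. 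Consequently $(\rho\sqrt n/\rho'')^d$ must remain polynomially bounded in $d$, which forces $\rho''\ge\rho\sqrt n$. Choosing $\rho\in(r/\sqrt n,\, r)$ (nonempty since $n\ge 2$ and $r<\infty$) gives $\rho''\ge\rho\sqrt n>r$, contradicting $\rho''<r$.

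The main obstacle is the linearization step, which requires the intrinsic characterization of the filtration $\{A_{\ge k}\}$ (via the distinguished position of $\chi_0$ in the Gelfand space) together with a degree-by-degree analysis of $q$-commuting tuples in a Fr\'echet algebra. Once that rigidity is in place, the heart of the argument is the sharp quantitative tension: the $\cO_q(\BB_r^n)$-norm carries the multinomial weight $(k!/|k|!)^{1/2}$ which decays like $n^{-|k|/2}$ on spread-out monomials, whereas the $\cO_q(\DD_r^n)$-norm is flat in the coefficient of $x^k$; a continuous inverse would have to ``amplify'' by $\sqrt n$ on such monomials, which is incompatible with both algebras having the same polyradial radius $r$.
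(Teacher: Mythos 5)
Your overall scheme—linearize the putative isomorphism, force the degree-one coefficients to have modulus one, then derive a quantitative contradiction from the continuity estimate—parallels the paper's, but your final contradiction is a genuinely different argument and is worth noting. The paper encapsulates the $\sqrt n$ tension in the joint $\ell^2$-spectral radius $r_2$ (Examples~\ref{ex:j-sprad_qpoly}--\ref{ex:j-sprad_qball} plus Corollary~\ref{cor:j-sprad_isom}), whereas you extract it directly from the inequality $(\rho/\rho'')^{|k|}\le C'(k!/|k|!)^{1/2}$ by specializing to nearly uniform multi-indices and applying Stirling. Both exploit exactly the same asymptotics ($(k!/|k|!)^{1/2}\sim n^{-|k|/2}$), but the paper's route produces an invariant ($r_2$) that is reusable and conceptually cleaner, while yours is more elementary and self-contained. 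Your Gelfand-space characterization of $\ker\chi_0$ (the unique point where the $n$ ``branches'' meet) is a valid substitute for the paper's direct commutator computation showing $f_{i,0}=g_{i,0}=0$; both require $q\ne 1$ so that the spectrum is a bouquet of disks rather than a polydisk/ball. You do need to make explicit that $A_{\ge k}=\overline{(A_{\ge 1})^k}$, so the filtration is determined by the ideal $\ker\chi_0$, which is preserved; this is what lets you pass from $z_i\in A_{\ge 1}$ to the cotangent-space map being well defined.

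There is, however, a genuine (though repairable) gap in your treatment of $q=-1$. You exclude $q=\pm 1$ from the degree-two analysis and claim the case $q=-1$ ``is handled by classical Poincar\'e and Proposition~\ref{prop:q-qinv},'' but Proposition~\ref{prop:q-qinv} relates $\cO_q$ to $\cO_{q^{-1}}$, and $(-1)^{-1}=-1$, so it reduces $q=-1$ to itself and gives nothing. The fix is to observe that the degree-two analysis only needs $q\ne 1$: comparing coefficients at $x_m^2$ gives $(1-q)\alpha_{im}\alpha_{jm}=0$, hence $\alpha_{im}\alpha_{jm}=0$ for all $m$ and $i<j$, which together with invertibility shows $\alpha$ is a generalized permutation matrix. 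The additional relation you were presumably using (coefficients at $x_m x_\ell$, $m<\ell$, which give $(q^{-1}-q)\alpha_{i\ell}\alpha_{jm}=0$) does require $q\ne\pm1$ and would force the permutation to be trivial, but that extra conclusion is unnecessary: as in the paper's proof, one simply carries the permutation $\tau$ through the rest of the argument, and all your norm estimates (on $\|z^k\|_{\DD,\rho}$) are unaffected since replacing $x^k$ by $x^{k\circ\tau^{-1}}$ changes neither $|k|$ nor the factor $k!/|k|!$. So drop the $q=-1$ exception and run the same argument with an explicit $\tau$; the proof then covers every $q$ with $|q|=1$, $q\ne 1$.
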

\begin{proof}
If $q=1$, then the result follows from the classical Poincar\'e theorem
(see the beginning of this section). Thus we may suppose that $q\ne 1$.
Let $A=\cO_q(\DD_r^n)$, $B=\cO_q(\BB_r^n)$, and
assume, towards a contradiction, that $\varphi\colon B\to A$ is a topological
algebra isomorphism. For each $i=1,\ldots ,n$, let $f_i=\varphi(x_i)\in A$
and $g_i=\varphi^{-1}(x_i)\in B$. Given $k\in\Z_+$, let $f_{i,k}$
(respectively, $g_{i,k}$) denote the $k$th homogeneous
component of $f_i$ (respectively, $g_i$).
We claim that
\begin{equation}
\label{f0=0}
f_{i,0}=g_{i,0}=0 \qquad (i=1,\ldots, n).
\end{equation}
Indeed, assume that $f_{i,0}\ne 0$ for some $i$, and fix any $j\ne i$.
Then we have
\begin{equation}
\label{f_i_f_j}
f_i f_j=q' f_j f_i \qquad (\text{where $q'=q$ or $q'=q^{-1}$}).
\end{equation}
Let now $k=\min\{\ell\in\Z_+ : f_{j,\ell}\ne 0\}$. Taking the $k$th homogeneous
components of~\eqref{f_i_f_j}, we obtain $f_{i,0} f_{j,k}=q' f_{j,k} f_{i,0}$,
whence $f_{j,k}=0$. The resulting contradiction implies that $f_{i,0}=0$ for all $i$.
A similar argument shows that $g_{i,0}=0$ for all $i$.

Thus for each $i=1,\ldots ,n$ we have $\varphi(x_i)\in A_{\ge 1}$. Using~\eqref{ge-incl}, we
see that for each $k\in\Z_+^n$
\[
\varphi(x^k)=\varphi(x_1)^{k_1}\cdots\varphi(x_n)^{k_n}\in A_{\ge 1}^{|k|}\subseteq A_{\ge |k|}.
\]
Similarly, $\varphi^{-1}(x^k)\in B_{\ge |k|}$. Hence for each $m\in\Z_+$ we have
\begin{equation}
\label{varphi_ge_incl}
\varphi(B_{\ge m})\subseteq A_{\ge m},\quad
\varphi^{-1}(A_{\ge m})\subseteq B_{\ge m}.
\end{equation}
Let now $\MM_n$ denote the algebra of all complex $n\times n$-matrices, and
let $\alpha=(\alpha_{ij})\in\MM_n$ and $\beta=(\beta_{ij})\in\MM_n$ be such that
\begin{equation}
\label{alpha_beta}
f_{i,1}=\sum_j \alpha_{ij} x_j,\quad g_{i,1}=\sum_j \beta_{ij} x_j.
\end{equation}
Using \eqref{f0=0} and \eqref{varphi_ge_incl}, we obtain
\[
\begin{split}
x_i
&=\varphi(g_i)
\in\varphi(g_{i,1}+B_{\ge 2})
\subseteq\varphi(g_{i,1})+A_{\ge 2}
=\sum_j \beta_{ij} f_j+A_{\ge 2}\\
&=\sum_j \beta_{ij} f_{j,1}+A_{\ge 2}
=\sum_{j,k} \beta_{ij} \alpha_{jk} x_k+A_{\ge 2}
=\sum_k (\beta\alpha)_{ik} x_k+A_{\ge 2}.
\end{split}
\]
Hence $\beta\alpha=1$ in $\MM_n$. In particular, $\alpha$ is invertible.

Fix now $i,j\in\{ 1,\ldots ,n\}$ with $i<j$. Since $f_i f_j=qf_j f_i$ and
$f_{i,0}=f_{j,0}=0$, it follows that $f_{i,1} f_{j,1}=q f_{j,1} f_{i,1}$.
Equivalently,
\[
\sum_k \alpha_{ik} x_k \sum_{\ell} \alpha_{j\ell} x_\ell
=q\sum_k \alpha_{jk} x_k \sum_{\ell} \alpha_{i\ell} x_\ell.
\]
Comparing the coefficients at $x_m^2$ yields $\alpha_{im}\alpha_{jm}=q\alpha_{jm}\alpha_{im}$,
whence $\alpha_{im}\alpha_{jm}=0$ for all $m=1,\ldots ,n$ and for all $i<j$.
In other words, each column of $\alpha$ contains at most one nonzero element.
Since $\alpha$ is invertible, we conclude that there exists a permutation $\sigma$
of $\{ 1,\ldots ,n\}$ such that $\alpha_{ij}=0$ whenever $i\ne\sigma(j)$,
and $\alpha_{\sigma(j)j}\ne 0$. Let $\tau=\sigma^{-1}$, and let $\lambda_i=\alpha_{i\tau(i)}$.
Since $\beta=\alpha^{-1}$, it follows that
\begin{align*}
\alpha_{i\tau(i)}&=\lambda_i\ne 0,\quad \alpha_{ij}=0\quad (j\ne\tau(i));\\
\beta_{i\sigma(i)}&=\lambda_{\sigma(i)}^{-1}\ne 0,\quad \beta_{ij}=0\quad (j\ne\sigma(i)).
\end{align*}
Together with~\eqref{f0=0} and~\eqref{alpha_beta}, this implies that
\begin{equation}
\label{f_i_g_i_incl}
\varphi(x_i)=f_i\in\lambda_i x_{\tau(i)}+A_{\ge 2},
\quad \varphi^{-1}(x_i)=g_i\in\lambda_{\sigma(i)}^{-1} x_{\sigma(i)} +B_{\ge 2}.
\end{equation}
Therefore for each $d\in\Z_+$ we have
\[
\varphi(x_i^d)\in\lambda_i^d x_{\tau(i)}^d + A_{\ge(d+1)},
\quad \varphi^{-1}(x_i^d)\in\lambda_{\sigma(i)}^{-d} x_{\sigma(i)}^d +B_{\ge(d+1)},
\]
whence for each $\rho\in (0,r)$ we obtain
\begin{align}
\label{x_i^d_1}
\| \varphi(x_i^d) \|_{\DD,\rho}
&\ge \| \lambda_i^d x_{\tau(i)}^d \|_{\DD,\rho}
=|\lambda_i|^d \rho^d,\\
\label{x_i^d_2}
\| \varphi^{-1}(x_i^d) \|_{\BB,\rho}
&\ge \| \lambda_{\sigma(i)}^{-d} x_{\sigma(i)}^d \|_{\BB,\rho}
=|\lambda_{\sigma(i)}^{-1}|^d \rho^d.
\end{align}
Fix now $\rho\in (0,r)$, and choose $\omega(\rho)\in (\rho,r)$
and $C(\rho)>0$ such that
\[
\| \varphi(b)\|_{\DD,\rho} \le C(\rho) \| b\|_{\BB,\omega(\rho)} \qquad (b\in B).
\]
Letting $b=x_i^d$ and using~\eqref{x_i^d_1}, we see that
\begin{equation}
\label{lambda_i_est}
|\lambda_i|^d \rho^d \le \| \varphi(x_i^d) \|_{\DD,\rho}
\le C(\rho) \| x_i^d\|_{\BB,\omega(\rho)}=C(\rho)\omega(\rho)^d.
\end{equation}
Raising \eqref{lambda_i_est} to the power $1/d$ and letting then $d\to\infty$,
we conclude that $|\lambda_i|\le\omega(\rho)/\rho$. Letting now $\rho\to r$,
we obtain $|\lambda_i|\le 1$. Applying the same argument to $\varphi^{-1}$
and using~\eqref{x_i^d_2} instead of~\eqref{x_i^d_1}, we see that
$|\lambda_{\sigma(i)}^{-1}|\le 1$. Finally, $|\lambda_i|=1$ for all $i=1,\ldots ,n$.

Given $\alpha=(\alpha_1,\ldots ,\alpha_d)\in W_n$, let
$\tau(\alpha)=(\tau(\alpha_1),\ldots ,\tau(\alpha_d))\in W_n$.
Using again \eqref{f_i_g_i_incl}, we see that
\[
f_\alpha\in \lambda_\alpha x_{\tau(\alpha)} + A_{\ge (|\alpha|+1)} \qquad (\alpha\in W_n),
\]
whence
\[
\| f_\alpha\|_{\DD,\rho}\ge \| \lambda_\alpha x_{\tau(\alpha)}\|_{\DD,\rho}
=\| x_\alpha\|_{\DD,\rho} \qquad (\alpha\in W_n,\;\rho\in (0,r)).
\]
Therefore $r_2^A(f_1,\ldots ,f_n)\ge r_2^A(x_1,\ldots ,x_n)$.
Combining this with~\eqref{j-sprad_qpoly}, \eqref{j-sprad_qball}, and using
Corollary~\ref{cor:j-sprad_isom}, we see that
\[
r=r_2^B(x_1,\ldots ,x_n)=r_2^A(f_1,\ldots ,f_n)
\ge r_2^A(x_1,\ldots ,x_n)=r\sqrt{n}.
\]
The resulting contradiction completes the proof.
\end{proof}

We conclude the paper with an open problem
related to the notion of an HFG algebra \cite{Pir_ncStein,Pir_HFG}.
Let $\cF_n$ denote the Arens-Michael envelope of the free algebra on $n$ generators.
A Fr\'echet algebra $A$ is said to be {\em holomorphically finitely generated} (HFG for short)
if $A$ is isomorphic to a quotient of $\cF_n$ for some $n$.
There is also an ``internal'' definition given in terms of J.\,L.\,Taylor's free functional
calculus. By \cite[Theorem 3.22]{Pir_HFG},
a commutative Fr\'echet-Arens-Michael algebra is holomorphically finitely generated if and only if
it is topologically isomorphic to $\cO(X)$ for some Stein space $(X,\cO_X)$ of finite
embedding dimension. Together with O.\,Forster's theorem \cite{For},
this implies that the category of commutative HFG algebras is anti-equivalent to the
category of Stein spaces of finite embedding dimension. There are many natural examples
of noncommutative HFG algebras \cite[Section 7]{Pir_HFG}.
For instance, $\cO_q(\DD^n_r)$ is an HFG algebra.
By Theorem~\ref{thm:poly_vs_ball_2}, $\cO_q(\BB^n_r)$ is an HFG algebra
provided that $|q|\ne 1$.

\begin{problem}
Is $\cO_q(\BB^n_r)$ an HFG algebra in the case where $|q|=1,\; q\ne 1$?
\end{problem}

\end{document}